\newtheorem{theorem}{Theorem} [section]
\newtheorem{lemma}[theorem]{Lemma}
\DeclareMathOperator{\cl}{cl}
\title{A new minor-closed class of transversal matroids}
\author[G.\ Toft]{Gerry Toft}
\email{gerrytoft3@gmail.com}
\date{\today}
\keywords{Matroids, transversal, minors, bicircular, multi-path}
\begin{document}
\begin{abstract}
We provide a characterisation of when a single-element contraction of a transversal matroid is itself transversal. Using this characterisation, we define a new class of transversal matroids closed under minors, which we call path-circular matroids. Path-circular matroids generalise both of the well-known classes of bicircular matroids and multi-path matroids.
\end{abstract}

\maketitle

\section{Introduction}

Along with representable and graphic matroids, transversal matroids are one of the most common examples of naturally-arising matroids. With this in mind, it is perhaps surprising that many aspects of the class of transversal matroids remain mysterious. Unlike representable and graphic matroids, transversal matroids are not closed under minors, and it is not known when the contraction of a transversal matroid is itself transversal.

A recent breakthrough concerning minors of transversal matroids was made by Bastida \cite{bas24}, who found a polynomial-time algorithm to determine whether a single-element contraction of a transversal matroid $M$ is  also transversal, and to construct a presentation if so. We consider the same problem from a structural, rather than algorithmic, point of view. The aim is to provide a tool which can be used in proofs concerning minors of transversal matroids. The first main results of this paper are Theorems~\ref{thm: contraction} and \ref{thm: construction}. Theorem~\ref{thm: contraction} is a characterisation for when a single-element contraction of a transversal matroid is transversal, and, if it is, Theorem~\ref{thm: construction} describes how to find a presentation. These theorems will require the following definitions.

Let $M$ be a transversal matroid with presentation $\mathcal A = (A_1, A_2, \ldots, A_r)$, and let $e \in E(M)$. A graph $G$ is an \emph{$(e, \mathcal A)$-presenting graph} if there is a bijection 
$$
\phi : V(G) \rightarrow \{i \in \{1,2,\ldots,r\} : e \in A_i\}
$$ 
such that, for all distinct $v_1, v_2 \in V(G)$, the subgraph of $G$ induced by the vertices
$$
\{u \in V(G) : A_{\phi(u)} \subseteq \cl_M^*(A_{\phi(v_1)} \cup A_{\phi(v_2)})\}
$$
is connected. We call the bijection $\phi$ an \emph{$(e, \mathcal A)$-presenting map of $G$}.

Furthermore, $G$ is a \emph{minimal $(e, \mathcal A)$-presenting graph} if it is an $(e, \mathcal A)$-presenting graph, and the deletion of each edge of $G$ is not an $(e, \mathcal A)$-presenting graph. For instance, Figure~\ref{fig: min example} shows minimal $(e, \mathcal A_i)$-presenting graphs for
\begin{align*}
\mathcal A_1 &= (\{e,u,v\},\{e,w,x\},\{w,y,z\}) \,, \\
\mathcal A_2 &= (\{e,s,t,u,v\},\{e,u,v,w,x\},\{e,w,x,y,z\}) \,, \text{ and} \\
\mathcal A_3 &= (\{e,w,x,y,z\},\{e,w,x,y,z\},\{e,w,x,y,z\},\{e,w,x,y,z\}) \,.
\end{align*}
There is a unique minimal $(e, \mathcal A_1)$-presenting graph, and a unique minimal $(e, \mathcal A_2)$-presenting graph, while any tree on $4$ vertices is a minimal $(e, \mathcal A_3)$-presenting graph.

\begin{figure}
\centering
\begin{subfigure}[t]{0.31\textwidth}
\centering
\begin{tikzpicture}
\coordinate (a1) at (0,0);
\coordinate (a2) at (-1,-1.73);
\coordinate (a3) at (1,-1.73);

\filldraw (a1) circle (3pt) node[label=above:$\{e{,}u{,}v\}$] {};
\filldraw (a2) circle (3pt) node[label=below:$\{e{,}w{,}x\}$,xshift=-5] {};
\filldraw (a3) circle (3pt) node[label=below:$\{e{,}y{,}z\}$,xshift=5] {};

\draw (a1) -- (a2) -- (a3) -- (a1);
\end{tikzpicture}
\subcaption{A minimal $(e, \mathcal A_1)$-presenting graph.}
\end{subfigure}
\hspace{0.02\textwidth}
\begin{subfigure}[t]{0.31\textwidth}
\centering
\begin{tikzpicture}
\coordinate (a1) at (-1.5,0);
\coordinate (a2) at (0,0);
\coordinate (a3) at (1.5, 0);

\filldraw (a1) circle (3pt) node[label=above:$\{e{,}s{,}t{,}u{,}v\}$] {};
\filldraw (a2) circle (3pt) node[label=below:$\{e{,}u{,}v{,}w{,}x\}$] {};
\filldraw (a3) circle (3pt) node[label=above:$\{e{,}w{,}x{,}y{,}z\}$] {};

\filldraw[white] (0,-1.5);

\draw (a1) -- (a3);
\end{tikzpicture}
\subcaption{A minimal $(e, \mathcal A_2)$-presenting graph.}
\end{subfigure}
\hspace{0.02\textwidth}
\begin{subfigure}[t]{0.31\textwidth}
\centering
\begin{tikzpicture}
\coordinate (a1) at (0,0);
\coordinate (a2) at (-1, -1.73);
\coordinate (a3) at (1, -1.73);
\coordinate (a4) at (0, -1.15);

\filldraw (a1) circle (3pt) node[label=above:$\{e{,}w{,}x{,}y{,}z\}$] {};
\filldraw (a2) circle (3pt) node[label=below:$\{e{,}w{,}x{,}y{,}z\}$,xshift=-10] {};
\filldraw (a3) circle (3pt) node[label=below:$\{e{,}w{,}x{,}y{,}z\}$,xshift=10] {};
\filldraw (a4) circle (3pt) node[label=right:$\{e{,}w{,}x{,}y{,}z\}$,yshift=5,xshift=-2] {};

\draw (a1) -- (a4) -- (a3);
\draw (a2) -- (a4);
\end{tikzpicture}
\subcaption{A minimal $(e, \mathcal A_3)$-presenting graph.}
\end{subfigure}
\caption{Minimal $(e, \mathcal A_i)$-presenting graphs.} \label{fig: min example}
\end{figure}
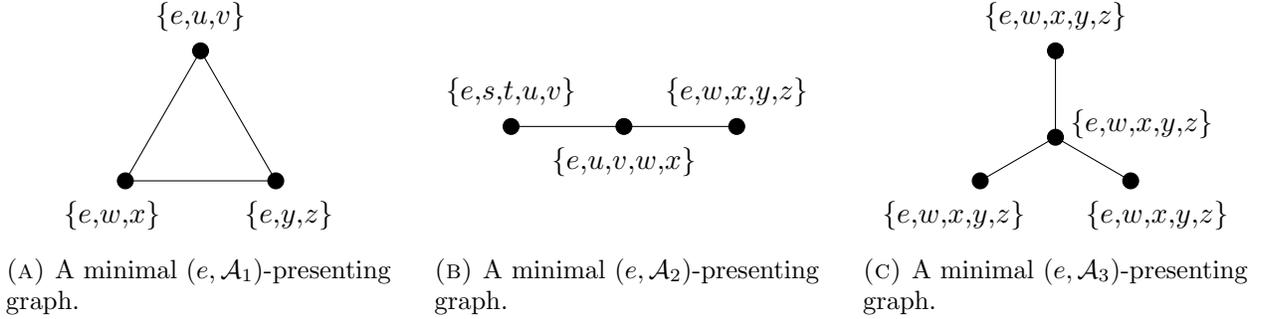

\begin{theorem} \label{thm: contraction}
Let $M$ be a transversal matroid with presentation $\mathcal A$ such that $|\mathcal A| = r(M)$, and let $e \in E(M)$. Let $G$ be a minimal $(e, \mathcal A)$-presenting graph. Then $M / e$ is transversal if and only if $G$ is a tree.
\end{theorem}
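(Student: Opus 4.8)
The plan is to prove the two implications separately: the ``if'' direction by an explicit construction — from the tree $G$ I build a presentation of $M/e$, which is essentially the content of Theorem~\ref{thm: construction} — and the ``only if'' direction by contradiction, starting from a hypothetical presentation of $M/e$ with $r(M)-1$ parts. First I fix notation. Write $S=\{i : e\in A_i\}$ and $k=|S|$, identify $V(G)$ with $S$ through $\phi$, abbreviate $A_v:=A_{\phi(v)}$, and for distinct $v_1,v_2\in V(G)$ put $T(v_1,v_2):=\{u\in V(G): A_u\subseteq\cl_M^*(A_{v_1}\cup A_{v_2})\}$, so that $G$ is $(e,\mathcal A)$-presenting precisely when $G[T(v_1,v_2)]$ is connected for all distinct $v_1,v_2$. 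If $e$ is a loop then $M/e=M\setminus e$ is transversal and $G$ is empty; if $k=1$ then deleting the unique part containing $e$ presents $M/e$ and $G$ is a single vertex; in each case $G$ is a tree, so I may assume $e$ is not a loop and $k\ge 2$. Since $v_1,v_2\in T(v_1,v_2)$, connectedness of $G[T(v_1,v_2)]$ forces a path in $G$ from $v_1$ to $v_2$, so $G$ is connected; hence ``$G$ is a tree'' is equivalent to ``$|E(G)|=k-1$'', and to ``$G$ is acyclic''. This is the form I would actually verify.

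For the ``if'' direction, assume $G$ is a tree. To each edge $\epsilon=v_1v_2$ of $G$ associate the set $B_\epsilon:=(A_{v_1}\cup A_{v_2})\setminus\{e\}$, and let $\mathcal B$ be the list consisting of the sets $B_\epsilon$ ($\epsilon\in E(G)$) together with the sets $A_j$ for the indices $j$ with $e\notin A_j$. Since $G$ has $k-1$ edges and there are $r-k$ such indices, $|\mathcal B|=r-1=r(M/e)$, so it suffices to show that for every $I\subseteq E\setminus\{e\}$ the set $I$ has a matching into $\mathcal B$ if and only if $I\cup\{e\}$ has a matching into $\mathcal A$ — the latter being exactly the assertion $I\in\mathcal I(M/e)$. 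For the ``only if'' part of this I would root $G$ at the part $v_0$ to which a given matching of $I\cup\{e\}$ sends $e$; every non-root vertex $v$ then owns the edge joining it to its parent, which gives a bijection from $V(G)\setminus\{v_0\}$ to $E(G)$, and I reroute each $x\in I$ that was matched to some $A_{\phi(v)}$ with $v\ne v_0$ to the set $B_\epsilon$ for the edge $\epsilon$ owned by $v$ (legitimate since $x\ne e$), leaving the elements matched to parts $A_j$ with $e\notin A_j$ in place. For the ``if'' part — the heart of the matter — I start from a matching of $I$ into $\mathcal B$ that uses the parts $B_\epsilon$ over some edge set $F\subseteq E(G)$, with $x_\epsilon$ the element placed on $\epsilon$, and I seek an injection $w\colon F\to V(G)$ with $w(\epsilon)$ an endpoint of $\epsilon$ and $x_\epsilon\in A_{w(\epsilon)}$ for each $\epsilon\in F$; since $|F|\le k-1$, such a $w$ omits some vertex $v^\ast$, and then sending $e$ to $A_{v^\ast}$, each $x_\epsilon$ to $A_{w(\epsilon)}$, and every other element of $I$ to its original part produces a matching of $I\cup\{e\}$ into $\mathcal A$.

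I expect the existence of this injection $w$ to be the main obstacle of the proof. By Hall's theorem it can fail only if some sub-forest $F'\subseteq F$ violates Hall's condition — that is, more than (the number of components of $F'$) of the vertices incident with $F'$ fail to contain the element on any of their incident $F'$-edges — and this is exactly the point at which the defining $\cl_M^*$-connectivity of $G$ must be used. The route I would pursue is to choose the matching of $I$ into $\mathcal B$ extremally (using as few parts $B_\epsilon$ as possible, or repairing it along augmenting paths in the bipartite incidence graph of $\mathcal A$), so that a Hall obstruction persisting in $\mathcal B$ would force a corresponding obstruction in $\mathcal A$; reading that obstruction off as two vertices $a,b$ lying in a single component of $F'$ and failing Hall locally, the elements placed on the path from $a$ to $b$ would then witness that $A_a\cup A_b\not\subseteq\cl_M^*(A_a\cup A_b)$ unless the subgraph $G[T(a,b)]$ can be disconnected by deleting the edges in question — which is impossible, because $G$ is a tree in which $T(a,b)$ already spans a connected subgraph. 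Making this translation between the algebra of $\cl_M^*$ and the graph surgery on $G$ precise is the delicate step, and I would expect it to be most of the work behind Theorem~\ref{thm: construction}.

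For the converse, suppose $M/e$ is transversal; since $e$ is not a loop, $M/e$ has a presentation $\mathcal C$ with $|\mathcal C|=r-1$. The plan is to reverse the construction above: by tracking, for each part $C\in\mathcal C$, which of the parts $A_i$ with $i\in S$ it can ``absorb'' — in the sense that placing a representative of $A_i$ into $C$ is consistent with realising $M/e$ — I would read off a graph $H$ on $V(G)$ with at most $k-1$ edges that is itself $(e,\mathcal A)$-presenting. Combined with an auxiliary lemma, to be proved alongside, stating that all minimal $(e,\mathcal A)$-presenting graphs have the same number of edges — equivalently, that the minimum edge-count over all $(e,\mathcal A)$-presenting graphs is attained by every minimal one — the existence of $H$ forces $|E(G)|\le k-1$, so that $G$ is a tree. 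Here too the delicate point is the interaction between a presentation of $M/e$ and the $\cl_M^*$-structure of $\mathcal A$, and I expect the bookkeeping needed to build $H$, and to verify the equal-edge-count lemma, to mirror the difficulties encountered in the ``if'' direction.
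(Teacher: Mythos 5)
Your plan is genuinely different from the paper's --- the paper never touches partial transversals directly, but instead dualises, passes to the maximal presentation (its Lemmas~\ref{lem: extend presentation} and \ref{lem: simplify presentation}), and applies Mason's $\alpha$-criterion (Lemma~\ref{lem: alpha}), computing $\alpha_{M\backslash e}(F)$ recursively over cyclic flats $F$ in terms of the edges of $G[\mathcal A_e(F\cup\{e\})]$. However, your proposal has two gaps that are not finishing details but the entire content of the theorem. In the ``if'' direction, the step you explicitly defer --- producing the injection $w$ that orients each used edge $\epsilon\in F$ to an endpoint whose set contains $x_\epsilon$ --- is precisely where the hypothesis that $G$ is a \emph{presenting} graph must enter, and you give no mechanism for converting a Hall obstruction for $w$ into a violation of the connectivity of some $G[T(a,b)]$. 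This cannot be waved through: for an arbitrary tree on $S$ the equivalence you are trying to prove is false. Taking $\mathcal A_1=(\{e,u,v\},\{e,w,x\},\{e,y,z\})$ from Figure~\ref{fig: min example} and the (non-presenting) path $1\text{--}2\text{--}3$, one gets $\mathcal B=(\{u,v,w,x\},\{w,x,y,z\})$, in which $\{w,x\}$ is a partial transversal even though $\{e,w,x\}$ is not a partial transversal of $\mathcal A_1$ (both $w$ and $x$ lie only in the second set). So the ``translation between the algebra of $\cl_M^*$ and the graph surgery'' that you describe as the delicate step is in fact the whole proof, and it is absent; you also never engage with the dependence of the presenting-graph definition on the choice of presentation, which the paper must handle before its computations work.

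The ``only if'' direction is in worse shape: the construction of the presenting graph $H$ from an arbitrary presentation $\mathcal C$ of $M/e$ is unspecified (why should each part of $\mathcal C$ ``absorb'' at most two of the sets $A_i$ containing $e$, and why should the resulting graph be $(e,\mathcal A)$-presenting?), and the argument additionally rests on the unproved auxiliary lemma that every minimal $(e,\mathcal A)$-presenting graph attains the minimum edge count over all presenting graphs. That lemma is essentially equivalent to the hard half of the theorem (it is exactly what would let you pass from ``some presenting graph is a tree'' to ``this particular minimal $G$ is a tree''), so invoking it without an independent combinatorial proof is circular. The paper avoids this by arguing directly (Lemma~\ref{lem: tree necessary}) that if $G$ contains a cycle, then a minimal cyclic flat $F$ whose induced subgraph contains that cycle satisfies $\alpha_{M\backslash e}(F)<0$; the minimality of $G$ is used there concretely --- an edge $\{i,j\}$ of the cycle with $\cl_{M\backslash e}(A_i\cup A_j-e)=F$ could be deleted while preserving the presenting property. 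As it stands, both directions of your argument reduce to assertions that the hard part ``should'' work out, so the proposal does not constitute a proof.
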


\begin{theorem} \label{thm: construction}
Let $M$ be a transversal matroid, and let $e \in E(M)$. Let $\mathcal A = (A_1,A_2,\ldots,A_r)$ be a presentation of $M$, with $r = r(M)$, such that $e \in A_i$ for all $i \leq k$, and $e \notin A_j$ for all $j > k$. Let $G$ be a minimal $(e, \mathcal A)$-presenting graph which is a tree, and let $\phi$ be an $(e, \mathcal A)$-presenting map of $G$. Suppose the edge set of $G$ is
$$
E(G) = \{\{u_1,v_1\},\{u_2,v_2\},\ldots,\{u_{k-1},v_{k-1}\}\} \,.
$$
Then
$$
(A_{\phi(u_1)} \cup A_{\phi(v_1)} - e, A_{\phi(u_2)} \cup A_{\phi(v_2)} - e, \ldots, A_{\phi(u_{k-1})} \cup A_{\phi(v_{k-1})} - e, A_{k+1}, A_{k+2}, \ldots, A_r)
$$
is a presentation of $M / e$.
\end{theorem}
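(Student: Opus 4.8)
The plan is to write $N$ for the transversal matroid presented by the displayed family $\mathcal B$, and to prove $N = M/e$ by showing that $M/e$ and $N$ have the same independent sets. Note at the outset that $\mathcal B$ consists of $(k-1)+(r-k)=r-1$ subsets of $E(M)-e$ (the edge sets $A_{\phi(u_i)}\cup A_{\phi(v_i)}-e$ obviously avoid $e$, and $A_j$ with $j>k$ avoids $e$ by hypothesis), so $|\mathcal B|=r-1=r(M)-1=r(M/e)$ since $e$ is not a loop of $M$; and recall that $X\subseteq E(M)-e$ is independent in $M/e$ exactly when $X\cup e$ is independent in $M$. Throughout I would use the partial-transversal (matching) description of independence in a transversal matroid, the defect form of Hall's theorem, and one preliminary observation about dual closure: for $y\notin T$ one has $y\notin\cl_M^*(T)$ if and only if $y$ lies in some circuit of $M$ disjoint from $T$. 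This is immediate from the fact that $r_{M^*}(T\cup y)=r_{M^*}(T)$ if and only if $y\notin\cl_M(E(M)-T-y)$, i.e.\ $y$ is not a coloop of $M|(E(M)-T)$, i.e.\ $y$ lies in a circuit contained in $E(M)-T$.

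For the forward inclusion (every set independent in $M/e$ is independent in $N$), I would start from a partial transversal $\mu$ of $X\cup e$ in $\mathcal A$, note that $e\in A_{\mu(e)}$ forces $\mu(e)\le k$, root the tree $G$ at $v_0=\phi^{-1}(\mu(e))$, and transport $\mu$ to a partial transversal of $X$ in $\mathcal B$: each $x\in X$ with $\mu(x)>k$ keeps the set $A_{\mu(x)}$, while each $x\in X$ with $\mu(x)\le k$ (so $\mu(x)\ne\mu(e)$ and $w_x:=\phi^{-1}(\mu(x))$ is a non-root vertex) is sent to the edge of $G$ joining $w_x$ to its parent. Distinct non-root vertices have distinct parent edges, $\mu$ is injective, and $x\in A_{\phi(w_x)}-e$ because $x\ne e$; hence this is a valid partial transversal and $X$ is independent in $N$. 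Only connectedness of $G$ is used in this direction.

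For the reverse inclusion I would prove the contrapositive: if $Y\subseteq E(M)-e$ and $Y\cup e$ is dependent in $M$, then $Y$ is dependent in $N$. Pick a circuit $Z\subseteq Y\cup e$ of $M$; a standard consequence of Rado's/the defect Hall theorem is that $Z$ meets exactly $|Z|-1$ of the sets $A_i$. If $e\in Z$, then $A_1,\dots,A_k$ all meet $Z$ through $e$, so exactly $|Z-e|-k$ of $A_{k+1},\dots,A_r$ meet $Z-e$; since the tree $G$ has only $k-1$ edges, $Z-e$ meets at most $(k-1)+(|Z-e|-k)=|Z-e|-1$ members of $\mathcal B$, so $Z-e$, and hence $Y$, is dependent in $N$.

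The case $e\notin Z$ is the crux, and it is where the tree hypothesis and the $(e,\mathcal A)$-presenting hypothesis are genuinely used. Here $Z$ is a circuit of $M$ inside $Y$; I would assume for contradiction that $Z$ is independent in $N$, fix a partial transversal $\nu$ of $Z$ in $\mathcal B$, let $Z_1$ be the elements of $Z$ that $\nu$ sends to an edge of $G$, and set $W=\{w\in V(G):A_{\phi(w)}\cap Z\ne\emptyset\}$. Counting the $|Z|-1$ sets meeting $Z$ — at least $|Z-Z_1|$ of them among $A_{k+1},\dots,A_r$ (the images of $Z-Z_1$ under $\nu$), and exactly $|W|$ of them among $A_1,\dots,A_k$ (using $e\notin Z$) — gives $|W|\le|Z_1|-1$. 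On the other hand each $x\in Z_1$ lies in $A_{\phi(u)}-e$ or $A_{\phi(v)}-e$ for its edge $\nu(x)=\{u,v\}$, so $\nu(x)$ has an endpoint in $W$; since $\nu$ is injective, $G$ has at least $|Z_1|\ge|W|+1$ edges incident to $W$. Hence deleting $W$ from the tree $G$ removes at least $|W|+1$ edges but only $|W|$ vertices, so $G-W$ is disconnected. Choosing $v_1,v_2$ in different components of $G-W$ and a vertex $w_0\in W$ on the unique $v_1v_2$-path of $G$: the circuit $Z$ is disjoint from $A_{\phi(v_1)}\cup A_{\phi(v_2)}$ (as $v_1,v_2\notin W$) but meets $A_{\phi(w_0)}$, so the dual-closure observation gives $A_{\phi(w_0)}\not\subseteq\cl_M^*(A_{\phi(v_1)}\cup A_{\phi(v_2)})$; thus $w_0$ is not in the set $U=\{u\in V(G):A_{\phi(u)}\subseteq\cl_M^*(A_{\phi(v_1)}\cup A_{\phi(v_2)})\}$, while $v_1,v_2\in U$ and $w_0$ separates them in the tree $G$ — so the subgraph of $G$ induced by $U$ is disconnected, contradicting that $G$ is an $(e,\mathcal A)$-presenting graph. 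This shows $Z$, and hence $Y$, is dependent in $N$, which gives $I(N)=I(M/e)$ and the theorem. I expect this last paragraph — extracting the separating vertex $w_0$ from the counting bound and turning a circuit of $M$ into a violation of the $\cl_M^*$-condition — to be the main obstacle; the degenerate cases ($e$ a loop, $k=1$, or $Z_1$ empty) either cannot occur or collapse the argument to something immediate.
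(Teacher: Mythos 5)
Your proof is correct, but it takes a genuinely different route from the paper. The paper dualises (working with the co-transversal matroid $M^*$, so that contracting $e$ becomes deleting $e$), and then runs everything through Mason's $\alpha$-criterion: it computes $\alpha_{(M/e)^*}$ on cyclic flats by induction (relating $\alpha$ to the edges of $G$ inside each flat), reads the presentation off from the fact that $\alpha(F)$ counts the multiplicity of $F$ in the maximal presentation, and then needs two further reduction lemmas to pass from the maximal presentation back to an arbitrary one. You instead verify directly that the displayed family $\mathcal B$ has the same partial transversals as $M/e$: the forward inclusion is a clean re-matching argument along parent edges of the tree rooted at $\phi^{-1}(\mu(e))$; the reverse inclusion uses the fact that a circuit $Z$ of $M[\mathcal A]$ meets exactly $|Z|-1$ of the $A_i$ (which holds because every proper subset of $Z$ satisfies Hall's condition), a counting argument showing that an independent-in-$N$ circuit would force at least $|W|+1$ edges of the tree to be incident to the $|W|$ vertices whose sets meet $Z$, and the standard dual-closure fact that $y\notin\cl_M^*(T)$ iff $y$ lies in a circuit of $M$ disjoint from $T$ to convert the resulting cut vertex $w_0$ into a violation of the $(e,\mathcal A)$-presenting condition. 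All of these steps check out, including the degenerate cases ($Z_1=\emptyset$ yields $|W|\leq -1$ immediately, and $|W|\leq |Z_1|-1\leq k-2$ guarantees $G-W$ has at least two vertices). What each approach buys: yours is elementary and self-contained, avoiding cyclic flats, maximal presentations, and the $\alpha$-function entirely; the paper's $\alpha$-machinery is heavier but is what delivers the converse direction of Theorem~\ref{thm: contraction} (non-transversality when $G$ has a cycle), which your argument does not address and does not need to.
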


This research was motivated by the problem of finding the maximal minor-closed class of transversal matroids. The largest known minor-closed classes of transversal matroids are \emph{bicircular matroids} and \emph{multi-path matroids}. Bicircular matroids were introduced by Sim\~oes-Pereira \cite{sim72} as a matroid whose elements are the edges of a graph, and whose circuits are minimal, connected subgraphs containing at least two cycles. Matthews \cite{mat77} showed that a bicircular matroid can also be viewed as a transversal matroid that has a presentation in which every element is contained in at most two sets. To construct such a presentation $\mathcal A$, add a set to $\mathcal A$, for each vertex of the graph, consisting of the edges incident to that vertex. Multi-path matroids, introduced by Bonin and Gim\'enez \cite{bon07}, are transversal matroids with a cyclic ordering $\sigma$ of their elements, and a presentation $\mathcal B$ such that each set of $\mathcal B$ is a cyclic interval of $\sigma$, and no set of $\mathcal B$ contains another.

Theorem~\ref{thm: contraction} implies that a necessary condition for a transversal matroid $M[\mathcal A]$ to be a member of the maximal minor-closed class of transversal matroids is that, for all $e \in E(M[\mathcal A])$, each minimal $(e, \mathcal A)$-presenting graph is a tree. We define a class of transversal matroids which satisfy a stronger condition, that these graphs are paths, and show that this class is closed under minors.

Let $G$ be a simple graph. A \emph{path-circular collection $\mathcal P$ of $G$} is a collection of (not necessarily distinct) paths of $G$ such that, if $u_1,u_2,\ldots,u_k$ are the vertices (in order) of a path in $\mathcal P$ and $i \in \{2,3,\ldots,k-1\}$, then
\begin{enumerate}[(i)]
	\item the degree of $u_i$ in $G$ is $2$, and
	\item every path of $\mathcal P$ which contains $u_i$ also contains at least one of $u_1$ and $u_k$.
\end{enumerate}
Note that a path-circular collection may contain ``null paths" consisting of zero vertices. These will correspond to loops in the resulting matroid.

Suppose $V(G) = \{v_1,v_2,\ldots,v_r\}$. For all $i \in \{1,2,\ldots,r\}$, let $N(v_i) = \{p \in \mathcal P : v_i \text{ is a vertex of } p\}$. Denote by $M(\mathcal P)$ the transversal matroid on groundset $\mathcal P$ with presentation $(N(v_1),N(v_2),\ldots,N(v_r))$. We call such a matroid a \emph{path-circular matroid}.

The definition of path-circular matroids is a generalisation of the definition of bicircular matroids: the elements of a bicircular matroid correspond to edges of a graph, while the elements of a path-circular matroid correspond to paths. Thus, a bicircular matroid is a path-circular matroid in which every path of $\mathcal P$ contains at most one edge. Note that, in this case, the conditions (i) and (ii) of a path-circular collection are satisfied trivially. Path-circular matroids also generalise multi-path matroids: a multipath matroid is a path-circular matroid in which the graph $G$ is a cycle. This is illustrated in Figure~\ref{fig: multipath}. In the image of a path-circular matroid in Figure~\ref{fig: multipath}, the paths of $\mathcal P$ are unbroken lines, with loops representing paths consisting of a single vertex, and the edges of the graph $G$ are the non-loop edges implied by the paths.

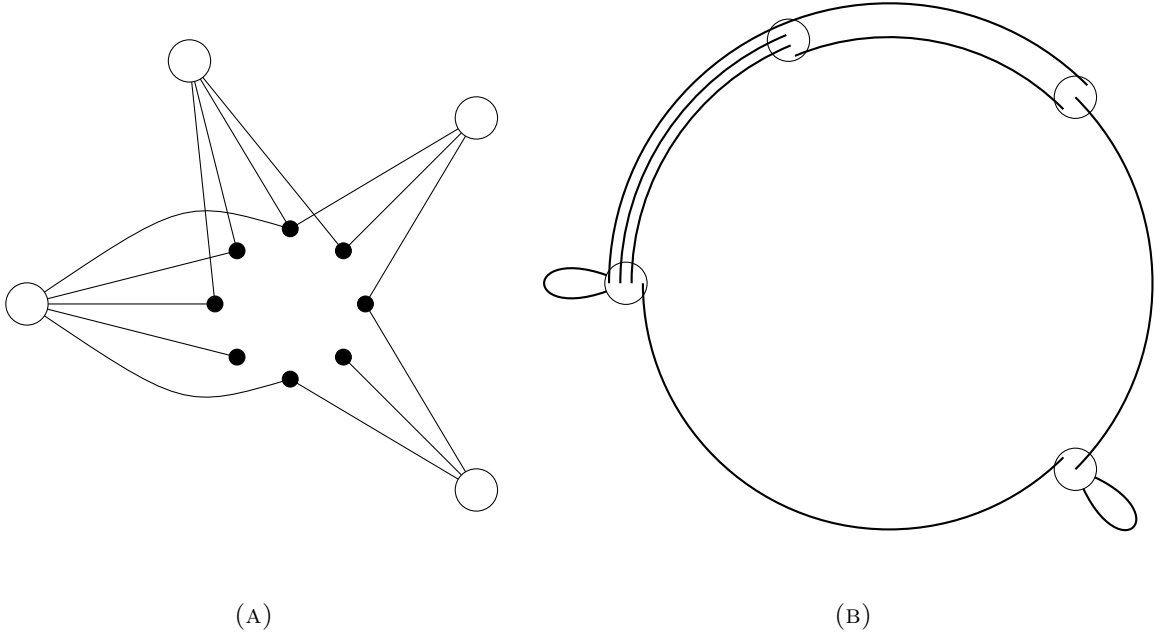
\begin{figure}
\centering
\begin{subfigure}{0.4\textwidth}
\centering
\begin{tikzpicture}
\foreach \theta in {0,...,7}
	\filldraw (\theta*45:1) circle (3pt);
	
\coordinate (a1) at (45:3.5);
\coordinate (a2) at (112.5:3.5);
\coordinate (a3) at (180:3.5);
\coordinate (a4) at (315:3.5);

\draw[white] (3.725,0) arc (0:360:3.725);

\draw (a1) -- (0:1);
\draw (a1) -- (45:1);
\draw (a1) -- (90:1);

\draw (a2) -- (45:1);
\draw (a2) -- (90:1);
\draw (a2) -- (135:1);
\draw (a2) -- (180:1);

\draw (a3) .. controls (135:2) .. (90:1);
\draw (a3) -- (135:1);
\draw (a3) -- (180:1);
\draw (a3) -- (225:1);
\draw (a3) .. controls (225:2) .. (270:1);

\draw (a4) -- (270:1);
\draw (a4) -- (315:1);
\draw (a4) -- (0:1);

\draw[fill = white] (a1) circle (8pt);
\draw[fill = white] (a2) circle (8pt);
\draw[fill = white] (a3) circle (8pt);
\draw[fill = white] (a4) circle (8pt);
\end{tikzpicture}
\subcaption{}
\end{subfigure}
\begin{subfigure}{0.55\textwidth}
\centering
\begin{tikzpicture}
\coordinate (a1) at (45:3.5);
\coordinate (a2) at (112.5:3.5);
\coordinate (a3) at (180:3.5);
\coordinate (a4) at (315:3.5);

\draw[thick] (a3) .. controls (172:5) and (188:5) .. (a3);
\draw[thick] (a4) .. controls (323:5) and (307:5) .. (a4);

\draw[fill = white] (a1) circle (8pt);
\draw[fill = white] (a2) circle (8pt);
\draw[fill = white] (a3) circle (8pt);
\draw[fill = white] (a4) circle (8pt);

\draw[thick] (45:3.725) arc (45:180:3.725);
\draw[thick] (112.5:3.575) arc (112.5:180:3.575);
\draw[thick] (112.5:3.425) arc (112.5:180:3.425);
\draw[thick] (180:3.275) arc (180:315:3.275);
%loop
\draw[thick] (315:3.5) arc (315:405:3.5);
\draw[thick] (45:3.275) arc (45:112.5:3.275);
\end{tikzpicture}
\subcaption{}
\end{subfigure}
\caption{A multi-path matroid with ground set given by the black vertices (A), and an isomorphic path-circular matroid (B).} \label{fig: multipath}
\end{figure}

\begin{theorem} \label{thm: minors}
Let $M$ be a path-circular matroid. Then $M \backslash e$ and $M / e$ are both path-circular matroids.
\end{theorem}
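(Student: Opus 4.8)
The plan is to treat deletion and contraction separately, the former being routine. For deletion, $M(\mathcal P)\backslash p$ is presented by $(N(v_1)-p,\ldots,N(v_r)-p)$, and since $p\in N(v_i)$ precisely when $v_i$ lies on $p$, this is exactly the presentation associated with the sub‑collection $\mathcal P-p$ in the same graph $G$. Removing paths from a path‑circular collection preserves conditions~(i) and (ii) — the graph, and hence all vertex degrees, is unchanged, and condition~(ii) only asks less — so $\mathcal P-p$ is path‑circular and $M(\mathcal P)\backslash p$ is a path‑circular matroid.

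For contraction I would first dispose of the degenerate cases. If $p$ is a null path it is a loop, so $M(\mathcal P)/p=M(\mathcal P)\backslash p$. If $p$ is a single vertex $u_1$, then $p$ lies only in the set $N(u_1)$, so by the standard fact about contracting an element that belongs to a unique set of a presentation, $M(\mathcal P)/p$ is presented by $(N(v):v\neq u_1)$ on ground set $\mathcal P-p$. Condition~(ii) forces $u_1$ to be an endpoint of every path of $\mathcal P$ through it (if $u_1$ were interior to some $q$, then $p$, a path of $\mathcal P$ through $u_1$, would have to contain an endpoint of $q$, impossible since $p=\{u_1\}$), so deleting $u_1$ from $G$ turns $\mathcal P-p$ into a path‑circular collection of $G-u_1$ realising this presentation; checking conditions~(i) and (ii) for $G-u_1$ again uses condition~(i) of $\mathcal P$, which prevents a path from re‑entering the neighbourhood of $u_1$ except at its endpoints.

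So suppose $p$ has vertices $u_1,\ldots,u_k$ with $k\geq 2$. I would build $G'$ by ``contracting $p$'': delete $u_1,\ldots,u_k$ and the edges $u_iu_{i+1}$, add a path $w_1w_2\cdots w_{k-1}$ (viewing $w_i$ as the edge $u_iu_{i+1}$), reattach every edge of $G$ at $u_1$ other than $u_1u_2$ to $w_1$ and every edge at $u_k$ other than $u_{k-1}u_k$ to $w_{k-1}$, and identify any parallel edges produced; by condition~(i) no interior vertex $u_2,\ldots,u_{k-1}$ has any other incident edge, so nothing is lost. Transform each $q\in\mathcal P-p$ to the walk $q'$ in $G'$ obtained by the same replacement. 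A case analysis from conditions~(i) and (ii) shows that $V(q)\cap V(p)$ is a union of at most two terminal segments of $p$, whence $q'$ is genuinely a path in $G'$; put $\mathcal P'=\{q':q\in\mathcal P-p\}$. It then remains to show that $\mathcal P'$ is a path‑circular collection of $G'$ and that $M(\mathcal P')=M(\mathcal P)/p$.

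For the path‑circular conditions, any interior vertex of some $q'\in\mathcal P'$ is either an old vertex $v\notin V(p)$ or one of the $w_i$, and in each case it descends from a vertex ($v$, or $u_1$ if $i=1$, or $u_k$ if $i=k-1$, or an interior $u_j$ otherwise) that is an interior vertex of $q$ in $G$; by condition~(i) of $\mathcal P$ that vertex has degree $2$ in $G$, and the reattachment rule then gives the corresponding vertex of $G'$ degree $2$ — this is the essential point, and the only use of condition~(i) of a path‑circular collection. Condition~(ii) for $\mathcal P'$ follows from condition~(ii) for $\mathcal P$ once one observes that each endpoint of $q$ lying on $p$ is absorbed into an endpoint of $q'$, so a path of $\mathcal P'$ meeting an interior vertex of $q'$ comes from a path of $\mathcal P$ meeting an interior vertex of $q$, hence containing an endpoint of $q$, which maps into $q'$'s endpoints. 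Finally, since $p$ is not a loop, $M(\mathcal P')=M(\mathcal P)/p$ amounts to $r_{M(\mathcal P')}(X)=r_{M(\mathcal P)}(X\cup p)-1$ for all $X\subseteq\mathcal P-p$, which I would prove by converting maximum matchings in one bipartite incidence graph into the other, using the explicit description of when $w_i\in V(q')$ in terms of which $u_j$ lie on $q$: roughly, one reroutes each matched pair $q\mapsto N(u_j)$ to a pair $q'\mapsto w_i$ with $i\in\{j-1,j\}$, reusing the slot $N(u_{j_0})$ freed by $p$. The hard part will be exactly this rerouting: because the admissible index $i$ for a given $q$ is dictated by how $q$ sits on $p$, the choices are interdependent, so one must show — via Hall's condition or an augmenting‑path argument, exploiting that at most $k-1$ of the $u_j$ are used and the terminal‑segment structure — that a consistent system of distinct representatives exists. (Alternatively one could invoke Theorems~\ref{thm: contraction} and~\ref{thm: construction} after reducing $\mathcal A$ to size $r(M)$, but the presentation they output is not visibly path‑circular and still requires a transformation of this kind, so the direct route seems preferable.)
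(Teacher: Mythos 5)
Your construction of $G'$ and $\mathcal P'$ and your verification that $\mathcal P'$ is a path-circular collection match the paper's, and the deletion case is fine. The genuine gap is the central identity $M(\mathcal P)/p = M(\mathcal P')$. You reduce it to $r_{M(\mathcal P')}(X)=r_{M(\mathcal P)}(X\cup p)-1$ and propose to prove this by rerouting maximum matchings, but you yourself flag the rerouting as ``the hard part'': the admissible slots $N(w_i)$ for a path $q$ matched to $N(u_j)$ are constrained by how $q$ meets $p$, the choices interact, and no Hall-type or augmenting-path argument is actually given (nor is the reverse inequality addressed). As written, the step that carries all the content of the theorem is a sketch, not a proof.

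Moreover, the reason you give for rejecting the route through Theorems~\ref{thm: contraction} and~\ref{thm: construction} --- that ``the presentation they output is not visibly path-circular and still requires a transformation of this kind'' --- is not correct, and this is exactly where the paper avoids your difficulty. One first checks (using the degree-$2$ condition and condition~(ii)) that $N(u_{i_2})\subseteq N(u_{i_1})\cup N(u_{i_3})$ whenever $i_1<i_2<i_3$, which shows that the path $p$ itself, viewed as a graph on the vertices $\{N(u_1),\ldots,N(u_k)\}$, is a minimal $(p,\mathcal A)$-presenting graph; it is a tree, so Theorem~\ref{thm: construction} outputs the presentation $(N(u_1)\cup N(u_2)-p,\ldots,N(u_{k-1})\cup N(u_k)-p,N(v_{k+1}),\ldots,N(v_r))$ of $M(\mathcal P)/p$. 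These union sets are precisely the neighbourhoods $N(u_{i,i+1})$ of your new vertices $w_i$ in $G'$, so this is verbatim the presentation of $M(\mathcal P')$ and no matching argument is needed. To complete your proof you would either need to carry out the system-of-distinct-representatives argument in full, or verify the minimal-presenting-graph claim above and invoke Theorem~\ref{thm: construction}.
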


Section~\ref{prelims} contains some required preliminaries and basic results concerning transversal matroids and their duals. In Section~\ref{contractions}, we prove Theorem~\ref{thm: contraction} and Theorem~\ref{thm: construction}. Finally, in Section~\ref{minorclosed}, we use Theorem~\ref{thm: construction} to prove Theorem~\ref{thm: minors}, and thus show that the class of path-circular matroids is closed under minors.

\section{Preliminaries and Basic Results} \label{prelims}

Our notation and terminology follows \cite{oxley} unless specified otherwise. We will use $[n]$ to represent the set $\{1,2,\ldots,n\}$.

Let $G$ be a graph, and $U$ a set of vertices of $G$. The \emph{subgraph of $G$ induced by $U$} is the graph obtained from $G$ by deleting every vertex not in $U$. We shall denote a path of $G$ by an ordered list of vertices which the path passes through. If $p = v_1,v_2,\ldots,v_n$ is a path of $G$, then we say the vertices $v_1$ and $v_n$ are the \emph{end vertices} of $p$.

\subsection{Transversal Matroids}

Let $\mathcal A = (A_1, A_2, \ldots, A_r)$ be a collection of (not necessarily distinct) subsets of a set $E$. A \emph{partial transversal} of $\mathcal A$ is a subset $\{x_1, x_2, \ldots, x_m\}$ of $E$ such that the elements $x_1, x_2, \ldots, x_m$ are distinct, and there exist distinct elements $k_1, k_2, \ldots, k_m \in [r]$ with $x_i \in A_{k_i}$ for all $i \in [m]$. A \emph{transversal} of $\mathcal A$ is a partial transversal $X$ such that $|X| = r$.

Edmonds and Fulkerson \cite{edm65} showed that there is a matroid on ground set $E$ whose independent sets are the partial transversals of $\mathcal A$. This matroid is called a \emph{transversal matroid}, and denoted $M[\mathcal A]$. The collection of sets $\mathcal A$ is called a \emph{presentation} of $M[\mathcal A]$.

The following is a well-known theorem due to Hall \cite{hal35} concerning transversals.

\begin{lemma} \label{lem: Hall}
Let $\mathcal A = (A_1, A_2, \ldots, A_r)$. Then $\mathcal A$ has a transversal if and only if, for all $J \subseteq [r]$,
$$
\left|\bigcup_{j \in J} A_j\right| \geq |J| \,.
$$
\end{lemma}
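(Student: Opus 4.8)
The plan is to treat the two implications separately. Necessity is immediate: if $\mathcal A$ has a transversal $\{x_1,\ldots,x_r\}$, witnessed by distinct indices $k_1,\ldots,k_r\in[r]$ with $x_i\in A_{k_i}$, then $\{k_1,\ldots,k_r\}=[r]$, so for any $J\subseteq[r]$ the $|J|$ distinct elements $x_i$ with $k_i\in J$ all lie in $\bigcup_{j\in J}A_j$, giving $\left|\bigcup_{j\in J}A_j\right|\ge|J|$.

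For sufficiency I would induct on $r$. The case $r\le 1$ is trivial: when $r=1$, Hall's condition forces $A_1\ne\varnothing$, and any single element of $A_1$ is a transversal. For the inductive step, assume the claim for all collections of fewer than $r$ sets, let $\mathcal A$ satisfy Hall's condition, and split into two cases according to whether the inequality is ever tight on a proper nonempty index set. In \emph{Case 1}, suppose $\left|\bigcup_{j\in J}A_j\right|\ge|J|+1$ for every nonempty $J\subsetneq[r]$. Pick any $x\in A_1$ (nonempty by Hall applied to $\{1\}$) and set $A_i'=A_i\setminus\{x\}$ for $2\le i\le r$. For nonempty $J\subseteq\{2,\ldots,r\}$ we get $\left|\bigcup_{j\in J}A_j'\right|\ge\left|\bigcup_{j\in J}A_j\right|-1\ge|J|$, since $J$ is a nonempty proper subset of $[r]$; so $(A_2',\ldots,A_r')$ satisfies Hall's condition, has a transversal $X'$ by induction, and $X'\cup\{x\}$ is a transversal of $\mathcal A$.

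In \emph{Case 2}, there is a nonempty $K\subsetneq[r]$ with $\left|\bigcup_{k\in K}A_k\right|=|K|$. The subcollection $(A_k:k\in K)$ satisfies Hall's condition, so by induction it has a transversal $X$; since $X\subseteq\bigcup_{k\in K}A_k$ and $|X|=|K|=\left|\bigcup_{k\in K}A_k\right|$, in fact $X=\bigcup_{k\in K}A_k$. I then claim $(A_j\setminus X:j\in[r]\setminus K)$ satisfies Hall's condition: for $J\subseteq[r]\setminus K$,
$$
\left|\bigcup_{j\in J}(A_j\setminus X)\right|=\left|\Bigl(\bigcup_{j\in J}A_j\Bigr)\cup X\right|-|X|=\left|\bigcup_{j\in J\cup K}A_j\right|-|K|\ge|J\cup K|-|K|=|J|,
$$
using $X=\bigcup_{k\in K}A_k$ and the disjointness of $J$ and $K$. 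By induction this collection has a transversal $Y$, necessarily disjoint from $X$, so $X\cup Y$ is a transversal of $\mathcal A$.

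The only genuine computation is the displayed line in Case~2, an instance of the submodularity of $J\mapsto\left|\bigcup_{j\in J}A_j\right|$; everything else is bookkeeping. The point requiring care is that in both cases the induction is applied to a collection of strictly fewer than $r$ sets — automatic in Case~1, and in Case~2 guaranteed by $K$ being a proper nonempty subset of $[r]$, which is exactly the case hypothesis. I expect the case split itself — isolating a tight set $K$ and restricting to $[r]\setminus K$ — to be the one real idea of the argument.
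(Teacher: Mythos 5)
Your proof is correct. Note, however, that the paper does not prove this lemma at all: it is stated as a known result and attributed to Hall \cite{hal35}, so there is no in-paper argument to compare against. What you have written is the classical Halmos--Vaughan induction: necessity by counting, and sufficiency by splitting on whether Hall's inequality is tight on some nonempty proper index set, shrinking the remaining sets by one element in the non-tight case and restricting to the complement of a tight set $K$ in the tight case. All the details check out --- in particular, the submodularity computation in Case~2 correctly uses $X=\bigcup_{k\in K}A_k$ and the disjointness of $J$ and $K$, and in both cases the induction is applied to strictly fewer than $r$ sets. This is a perfectly standard and complete proof of a result the paper treats as a black box.
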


Every deletion of a transversal matroid is transversal. This can be seen from the following (straightforward) lemma.

\begin{lemma} \label{lem: deletion presentation}
Let $M$ be a transversal matroid with presentation $\mathcal A = (A_1, A_2, \ldots, A_r)$, and let $X \subseteq E(M)$. Then 
$$
(A_1 \cap X, A_2 \cap X, \ldots, A_r \cap X)
$$
is a presentation of $M | X$.
\end{lemma}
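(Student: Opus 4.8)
The plan is to show directly that the transversal matroid $N := M[(A_1 \cap X, A_2 \cap X, \ldots, A_r \cap X)]$, regarded as a matroid on ground set $X$, has exactly the same independent sets as $M | X$. Since a matroid is determined by its family of independent sets, this suffices. Recall that the independent sets of $M | X$ are precisely the independent sets of $M$ contained in $X$, that is, the partial transversals of $\mathcal A$ that are subsets of $X$; and the independent sets of $N$ are the partial transversals of $(A_1 \cap X, \ldots, A_r \cap X)$.

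First I would check the forward inclusion. Let $Y$ be independent in $M | X$, so $Y \subseteq X$ and $Y = \{x_1, \ldots, x_m\}$ admits distinct indices $k_1, \ldots, k_m \in [r]$ with $x_i \in A_{k_i}$ for each $i$. Since each $x_i$ lies in $X$, in fact $x_i \in A_{k_i} \cap X$, so the same indices witness that $Y$ is a partial transversal of $(A_1 \cap X, \ldots, A_r \cap X)$; hence $Y$ is independent in $N$.

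Next the reverse inclusion. If $Y = \{x_1, \ldots, x_m\}$ is a partial transversal of $(A_1 \cap X, \ldots, A_r \cap X)$ with witnessing distinct indices $k_1, \ldots, k_m$ and $x_i \in A_{k_i} \cap X$, then on one hand $x_i \in X$ for all $i$, so $Y \subseteq X$; and on the other hand $x_i \in A_{k_i}$, so $Y$ is a partial transversal of $\mathcal A$ and therefore independent in $M$. Thus $Y$ is independent in $M | X$.

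Combining the two inclusions shows that $N$ and $M | X$ have the same independent sets, hence are equal, which is the assertion. The argument is entirely routine; the only point needing a moment's care is to track ground sets correctly — one must regard $(A_1 \cap X, \ldots, A_r \cap X)$ as a presentation over the ground set $X$, matching the ground set of the restriction $M | X$, rather than over $E(M)$.
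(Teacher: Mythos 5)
Your proof is correct and is exactly the routine verification of independent sets that the paper leaves implicit (it states this lemma without proof, calling it straightforward). Both directions are handled properly, and your closing remark about regarding the intersected system as a presentation over the ground set $X$ is the right point of care.
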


Although Theorem~\ref{thm: contraction} and Theorem~\ref{thm: construction} are stated in terms of contractions of transversal matroids, we shall prove these theorems by considering deletions of the duals of transversal matroids. We shall often refer to the duals of transversal matroids as \emph{co-transversal} matroids. The following three lemmas, concerning the rank and independent sets of a co-transversal matroid, will be useful. For the first of these, see \cite[Lemma 4.1]{ing73}.

\begin{lemma} \label{lem: rank_general}
Let $M$ be the dual of a transversal matroid with presentation $\mathcal A = (A_1, A_2, \ldots, A_r)$ such that $r = r^*(M)$. Let $X \subseteq E(M)$. Then 
$$
r(X) = \min_{Y \supseteq X}\left(|Y| - \left|\{i \in [r] : A_i \subseteq Y\}\right|\right) \,.
$$
\end{lemma}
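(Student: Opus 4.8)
The plan is to reduce the statement to the classical rank formula for transversal matroids via matroid duality. Write $N = M[\mathcal A]$, so that $M = N^*$ and, by the hypothesis $r = r^*(M)$, we have $r(N) = r$. For any $X \subseteq E(M)$, the standard dual-rank identity
$$
r_M(X) \;=\; |X| - r(N) + r_N\bigl(E(M) \setminus X\bigr) \;=\; |X| - r + r_N\bigl(E(M) \setminus X\bigr)
$$
reduces the problem to evaluating $r_N(E(M) \setminus X)$, the size of a largest partial transversal of $\mathcal A$ that avoids $X$.

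Next I would apply the defect form of Hall's theorem (equivalently, K\"onig's theorem on the bipartite incidence graph between $[r]$ and $E(M)$; compare Lemma~\ref{lem: Hall}): for every $Z \subseteq E(M)$,
$$
r_N(Z) \;=\; \min_{J \subseteq [r]} \left( r - |J| + \left| Z \cap \bigcup_{i \in J} A_i \right| \right) \,.
$$
Taking $Z = E(M) \setminus X$ and substituting into the previous display, the two copies of $r$ cancel, leaving
$$
r_M(X) \;=\; \min_{J \subseteq [r]} \left( |X| + \left| \, \bigcup_{i \in J} A_i \setminus X \, \right| - |J| \right) \,.
$$

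It remains to identify this minimum over subsets $J \subseteq [r]$ with the minimum over sets $Y \supseteq X$ in the statement; this is the only step requiring genuine care. In one direction, given $Y \supseteq X$ set $J = \{i \in [r] : A_i \subseteq Y\}$; then $\bigcup_{i \in J} A_i \setminus X \subseteq Y \setminus X$, so the $J$-term above is at most $|X| + (|Y| - |X|) - |J| = |Y| - |\{i : A_i \subseteq Y\}|$, giving $r_M(X) \le \min_{Y \supseteq X}\bigl(|Y| - |\{i : A_i \subseteq Y\}|\bigr)$. In the other direction, given $J \subseteq [r]$ set $Y_J = X \cup \bigcup_{i \in J} A_i$; then $|Y_J| = |X| + |\bigcup_{i \in J} A_i \setminus X|$ and $\{i : A_i \subseteq Y_J\} \supseteq J$, so the term for $Y_J$ is at most the $J$-term, which yields the reverse inequality. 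Together these prove the formula.

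The main obstacle is really just this last translation between the two minima --- in particular checking that the index set $J$ associated with a given $Y$ is large enough that its term does not exceed $|Y| - |\{i : A_i \subseteq Y\}|$ --- together with correctly invoking the transversal rank formula. The hypothesis $r = r^*(M)$ is used precisely to guarantee $r(N) = r$, i.e.\ that $\mathcal A$ has a transversal, which is what makes the cancellation of $r$ in the second display legitimate.
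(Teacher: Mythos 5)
Your argument is correct. Note, however, that the paper does not prove this lemma at all --- it simply cites it (as Lemma~4.1 of the reference given before the statement), so there is no in-paper proof to compare against; what you have supplied is a self-contained derivation of a quoted result. The route you take is the standard one: the dual-rank identity $r_M(X) = |X| - r(N) + r_N(E \setminus X)$, the defect (K\"onig--Ore) form of Hall's theorem giving $r_N(Z) = \min_{J \subseteq [r]}\bigl(r - |J| + |Z \cap \bigcup_{i \in J} A_i|\bigr)$, and then the translation between the minimum over $J \subseteq [r]$ and the minimum over supersets $Y \supseteq X$. That last translation is the only delicate point and you handle both directions correctly: for a given $Y$ the choice $J = \{i : A_i \subseteq Y\}$ forces $\bigcup_{i \in J} A_i \setminus X \subseteq Y \setminus X$, and for a given $J$ the choice $Y_J = X \cup \bigcup_{i \in J} A_i$ satisfies $\{i : A_i \subseteq Y_J\} \supseteq J$, so each minimum bounds the other. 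You also correctly isolate where the hypothesis $r = r^*(M)$ enters, namely in guaranteeing $r(N) = r$ so that the two occurrences of $r$ cancel; without it the stated formula can fail (the right-hand side with $Y = X$ and no sets contained in $X$ would give $|X|$ even when $X$ is dependent in $M$ for reasons tied to the deficiency of $\mathcal A$). I see no gap.
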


The next lemma, which will be used extensively in the proofs of Theorems~\ref{thm: contraction} and \ref{thm: construction}, is an immediate consequence of Lemma~\ref{lem: rank_general} in the case where $X$ is a flat. It can also be seen as the dual of \cite[Corollary 2.5]{bon15}.

\begin{lemma} \label{lem: rank}
Let $M$ be a the dual of a transversal matroid with presentation $\mathcal A = (A_1, A_2, \ldots, A_r)$ such that $r = r^*(M)$. Let $F$ be a flat of $M$. Then $r(F) = |F| - \left|\{i \in [r] : A_i \subseteq F\}\right|$. 
\end{lemma}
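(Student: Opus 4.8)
The plan is to obtain Lemma~\ref{lem: rank} directly from Lemma~\ref{lem: rank_general} by showing that, when the set $X$ there is a flat $F$, the minimum defining $r(F)$ is attained at the choice $Y = F$. One inequality is immediate: taking $Y = F$ as a candidate in the formula of Lemma~\ref{lem: rank_general} gives
$$
r(F) \leq |F| - \left|\{i \in [r] : A_i \subseteq F\}\right| \,.
$$

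For the reverse inequality, I would fix a set $Y \supseteq F$ attaining the minimum in Lemma~\ref{lem: rank_general} (such a $Y$ exists since $E(M)$ is finite) and argue that in fact $Y = F$. The crucial step is to check that $r(Y) = r(F)$: on one hand $r(F) \leq r(Y)$ because $F \subseteq Y$; on the other hand, applying Lemma~\ref{lem: rank_general} to $Y$ and using $Y$ itself as the witnessing superset gives $r(Y) \leq |Y| - \left|\{i \in [r] : A_i \subseteq Y\}\right| = r(F)$, the last equality being the defining property of our chosen $Y$. Hence $r(Y) = r(F)$. Now I would invoke the hypothesis that $F$ is a flat: for every $e \in Y$ we have $r(F) \leq r(F \cup \{e\}) \leq r(Y) = r(F)$, so $e \in \cl_M(F) = F$; therefore $Y \subseteq F$, and so $Y = F$. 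Substituting $Y = F$ into the minimum yields $r(F) = |F| - \left|\{i \in [r] : A_i \subseteq F\}\right|$, as required.

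I do not anticipate a genuine obstacle here; the argument is short, and the only bookkeeping point is that Lemma~\ref{lem: rank_general} requires the presentation to consist of exactly $r^*(M)$ sets, which is precisely the standing hypothesis. The one mildly nonobvious move is feeding the minimizer $Y$ back into Lemma~\ref{lem: rank_general} as its own test set in order to bound $r(Y)$ from above; everything else is routine.
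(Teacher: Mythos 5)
Your argument is correct and follows exactly the route the paper intends: Lemma~\ref{lem: rank} is stated there as an immediate consequence of Lemma~\ref{lem: rank_general} when $X$ is a flat, and your proof simply supplies the details (showing the minimizer $Y$ must satisfy $r(Y)=r(F)$ and hence $Y\subseteq\cl_M(F)=F$). No gaps.
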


\begin{lemma} \label{lem: independent}
Let $M$ be the dual of a transversal matroid with presentation $\mathcal A = (A_1, A_2, \ldots, A_r)$ such that $r = r^*(M)$. Let $X \subseteq E(M)$. Then $X$ is independent if and only if, for all $J \subseteq [r]$, 
$$
\left|X \cap \left(\bigcup_{j \in J} A_j\right)\right| \leq \left|\bigcup_{j \in J} A_j\right| - \left|J\right| \,.
	$$
\end{lemma}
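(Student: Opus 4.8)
The plan is to derive this from Lemma~\ref{lem: rank_general} by translating both sides into statements about sizes of set differences. Since $r = r^*(M)$, Lemma~\ref{lem: rank_general} applies, so $r(X) = \min_{Y \supseteq X}\bigl(|Y| - |\{i \in [r] : A_i \subseteq Y\}|\bigr)$; taking $Y = X$ shows $r(X) \le |X|$, so $X$ is independent if and only if $r(X) \ge |X|$, that is, if and only if
\[
|Y \setminus X| \;\ge\; \bigl|\{i \in [r] : A_i \subseteq Y\}\bigr| \qquad \text{for every } Y \supseteq X ,
\]
where I have used $|Y| - |X| = |Y \setminus X|$ (valid since $X \subseteq Y$); call this condition $(\star)$. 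Writing $U_J := \bigcup_{j \in J} A_j$, the displayed inequality in the lemma rearranges, via $|U_J| = |X \cap U_J| + |U_J \setminus X|$, to the statement that $|U_J \setminus X| \ge |J|$ for all $J \subseteq [r]$; call this $(\dagger)$. It then suffices to prove $(\star) \Leftrightarrow (\dagger)$.

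For $(\star) \Rightarrow (\dagger)$, I would fix $J \subseteq [r]$ and apply $(\star)$ to $Y := X \cup U_J$. Then $Y \setminus X = U_J \setminus X$, while $A_j \subseteq U_J \subseteq Y$ for every $j \in J$, so $\{i \in [r] : A_i \subseteq Y\} \supseteq J$; hence $|U_J \setminus X| = |Y \setminus X| \ge |\{i : A_i \subseteq Y\}| \ge |J|$. For $(\dagger) \Rightarrow (\star)$, I would fix $Y \supseteq X$ and set $J := \{i \in [r] : A_i \subseteq Y\}$. Then $U_J \subseteq Y$, so $U_J \setminus X \subseteq Y \setminus X$, and $(\dagger)$ gives $|Y \setminus X| \ge |U_J \setminus X| \ge |J| = |\{i : A_i \subseteq Y\}|$, which is exactly $(\star)$.

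This is essentially a bookkeeping argument, and I do not expect a genuine obstacle. The only points needing care are that the hypothesis $r = r^*(M)$ is precisely what licenses the use of Lemma~\ref{lem: rank_general}, and that the two choices $Y = X \cup U_J$ and $J = \{i : A_i \subseteq Y\}$ are the right extremal ones: the former is the smallest superset of $X$ containing $U_J$, and the latter is the largest index set $J$ with $U_J \subseteq Y$, which is what makes the inequalities transfer cleanly in each direction.
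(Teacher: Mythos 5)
Your proof is correct, but it takes a genuinely different route from the paper's. The paper argues via duality and Hall's theorem: $X$ is independent in $M$ exactly when $E - X$ is spanning in $M^*$, which (since $r = r^*(M)$) means the collection $(A_1 \cap (E - X), \ldots, A_r \cap (E - X))$ has a transversal, and Lemma~\ref{lem: Hall} turns that into the condition $|(E-X) \cap \bigcup_{j \in J} A_j| \ge |J|$ for all $J \subseteq [r]$, which is the stated bound after a one-line rearrangement. You instead start from the rank formula of Lemma~\ref{lem: rank_general}, reduce independence to your condition $(\star)$ quantified over all supersets $Y \supseteq X$, and then prove $(\star) \Leftrightarrow (\dagger)$ via the extremal choices $Y = X \cup U_J$ and $J = \{i \in [r] : A_i \subseteq Y\}$; both directions check out, including the degenerate cases $J = \emptyset$ and $Y = X$. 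The paper's argument is the more economical of the two, since Hall's condition is the displayed inequality in only light disguise and no quantifier translation is needed; your version buys a proof that runs entirely through Lemma~\ref{lem: rank_general} (the formula used throughout Section~\ref{contractions}) and makes explicit the useful fact that the minimum in that formula is attained at a set of the form $X \cup \bigcup_{j \in J} A_j$, so one need only test unions of the $A_j$ rather than arbitrary supersets. Since Lemma~\ref{lem: rank_general} is itself established by Hall-type arguments, the two proofs are close relatives rather than independent.
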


\begin{proof}
A set $X$ is independent in $M$ if and only if $E - X$ is spanning in $M^*$. Since $r = r^*(M)$, this means that there is a transversal of $\mathcal A$ contained in $E - X$, or, equivalently, the collection
$$
(A_1 \cap (E - X), A_2 \cap (E - X), \ldots, A_r \cap (E - X))
$$
has a transversal. By Lemma~\ref{lem: Hall}, this is true if and only if, for all $J \subseteq [r]$,
$$
\left| (E - X) \cap \left( \bigcup_{j \in J} A_j \right) \right | \geq |J| \,.
$$
This inequality is equivalent to
$$
\left|\bigcup_{j \in J} A_j \right| - \left|X \cap \left(\bigcup_{j \in J} A_j \right) \right| \geq |J| \,,
$$
which can be rearranged to give the desired result.
\end{proof}

\subsection{Cyclic Flats}
A \emph{cyclic flat} of a matroid is a flat which is a union of circuits. The set of cyclic flats of a matroid $M$ is denoted $\mathcal Z(M)$.

\begin{lemma} \label{lem: flats}
Let $M$ be the dual of a transversal matroid with presentation $\mathcal A = (A_1, A_2, \ldots, A_r)$. Let $F$ be a cyclic flat of $M$. Then there exists $J \subseteq [r]$ such that $F = \bigcup_{j \in J} A_j$.
\end{lemma}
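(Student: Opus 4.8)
The plan is to establish that $F=\bigcup_{j\in J}A_j$, where $J=\{i\in[r]:A_i\subseteq F\}$. The inclusion $\bigcup_{j\in J}A_j\subseteq F$ is immediate, so the entire content is the reverse inclusion: I must show that every $x\in F$ lies in some member $A_i$ of $\mathcal A$ that is itself contained in $F$. The two ingredients are the rank formula of Lemma~\ref{lem: rank} and the elementary fact that every element of a cyclic flat $F$ of $M$ lies on a circuit of $M|F$, and so is not a coloop of $M|F$.

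In detail, I would first reduce to the case $|\mathcal A|=r^*(M)$. This is legitimate because every presentation of a transversal matroid contains a sub-presentation whose cardinality equals the rank of that matroid --- a standard fact, provable for instance via submodularity of neighbourhoods in the bipartite incidence graph of $\mathcal A$ (one may repeatedly delete any $A_i$ disjoint from the union of the subsets $S$ of $E(M)$ maximising $|S|-|\{i:A_i\cap S\neq\emptyset\}|$) --- and an index set that works for such a sub-presentation works, as a subset of $[r]$, for $\mathcal A$ as well. So assume $|\mathcal A|=r^*(M)$ and fix $x\in F$. Since $F$ is a union of circuits of $M$, there is a circuit $C$ of $M$ with $x\in C\subseteq F$; as $C$ is then a circuit of $M|F$, the element $x$ is not a coloop of $M|F$, and hence $r(F-x)=r(F)$. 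Now Lemma~\ref{lem: rank} applied to the flat $F$ gives $r(F)=|F|-|J|$, while Lemma~\ref{lem: rank_general} applied with the admissible choice $Y=F-x$ gives $r(F-x)\le |F-x|-|\{i\in[r]:A_i\subseteq F-x\}|=|F|-1-|\{i\in J:x\notin A_i\}|$. Since $r(F-x)=r(F)$ and $|\{i\in J:x\notin A_i\}|=|J|-|\{i\in J:x\in A_i\}|$, these relations force $|\{i\in J:x\in A_i\}|\ge 1$; that is, $x$ lies in some $A_i\subseteq F$. As $x\in F$ was arbitrary, $F\subseteq\bigcup_{j\in J}A_j$.

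The step I expect to require the most care is this opening reduction. Lemmas~\ref{lem: rank_general} and~\ref{lem: rank} hold only when $|\mathcal A|=r^*(M)$, and they genuinely fail for larger presentations (for example when $\mathcal A$ contains a repeated set), so one cannot avoid passing to a sub-presentation of the correct size, routine though that passage is. An alternative that sidesteps it is to reformulate on the transversal side, using that $F$ is a cyclic flat of $M$ if and only if $E(M)-F$ is a cyclic flat of $M^*=M[\mathcal A]$, and to argue directly with matchings in the incidence graph of $\mathcal A$; the disadvantage is that such an argument must exploit the full cyclic-flat hypothesis rather than merely a rank inequality, so the augmenting-path bookkeeping becomes noticeably heavier than the short argument above.
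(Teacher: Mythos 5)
Your proof is correct, but it takes a genuinely different route from the paper's. The paper works element by element with the independence criterion of Lemma~\ref{lem: independent}: for each $e\in F$ it picks a circuit $C_e\ni e$ inside $F$, extracts from the violated Hall-type inequality an index set $J_e$ with $C_e\subseteq\bigcup_{j\in J_e}A_j\subseteq\cl(C_e)$, and sets $J=\bigcup_{e\in F}J_e$. You instead take the canonical candidate $J=\{i\in[r]:A_i\subseteq F\}$ outright and verify coverage by a rank count: since every $x\in F$ lies on a circuit of $M|F$ it is not a coloop there, so $r(F-x)=r(F)$, and comparing Lemma~\ref{lem: rank} applied to $F$ with the bound from Lemma~\ref{lem: rank_general} at $Y=F-x$ forces some $A_i\subseteq F$ to contain $x$. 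Both arguments hinge on the same property of cyclic flats (a circuit through every element); yours is shorter and pins down $J$ explicitly, while the paper's yields the slightly finer byproduct that each $\cl(C_e)$ is itself a union of sets of $\mathcal A$. One further point in your favour: the lemmas both proofs rely on (Lemma~\ref{lem: independent} in the paper, Lemmas~\ref{lem: rank_general} and~\ref{lem: rank} for you) are stated only for presentations with $r=r^*(M)$, a hypothesis absent from the statement of Lemma~\ref{lem: flats}; you handle this explicitly by first passing to a sub-presentation of the right size (a standard and correct reduction, after which the index set you obtain is still a subset of $[r]$), whereas the paper's proof invokes Lemma~\ref{lem: independent} without comment on this point.
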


\begin{proof}
Let $e \in F$. Since $F$ is cyclic, there is a circuit $C_e$ of $M$ such that $e \in C_e$ and $C_e \subseteq F$. By Lemma~\ref{lem: independent}, there exists $J_e \subseteq [r]$ such that 
\begin{equation}
\left|C_e \cap \left(\bigcup_{j \in J_e} A_j\right)\right| > \left|\bigcup_{j \in J_e} A_j \right| - |J_e| \,.
\label{eq: circuit_elements}
\end{equation}
Let $X_e = \bigcup_{j \in J_e} A_j$. The above inequality is still true if $C_e$ is replaced with $C_e \cap X_e$, so $C_e \cap X_e$ is dependent. Therefore, since $C_e$ is a circuit, we have that $C_e = C_e \cap X_e$, so $C_e \subseteq X_e$.

Next, we show that $X_e \subseteq \cl(C_e)$. Let $x \in X_e - C_e$. Since $C_e - e$ is independent, Lemma~\ref{lem: independent} implies that
$$
|(C_e - e) \cap X_e| \leq |X_e| - |J_e| \,,
$$
but, by (\ref{eq: circuit_elements}),
$$
|C_e \cap X_e| > |X_e| - |J_e| \,.
$$ 
Thus, 
$$
|(C_e - e) \cap X_e| = |X_e| - |J_e| \,,
$$
and so
$$
|((C_e - e) \cup \{x\}) \cap X_e| = |X_e| - |J_e| + 1 \,.
$$
Therefore, by Lemma~\ref{lem: independent}, $(C_e - e) \cup \{x\}$ is dependent, which implies that $x \in \cl(C_e)$. Hence, $X_e \subseteq \cl(C_e)$.

Now consider $J = \bigcup_{e \in F} J_e$. Let $X = \bigcup_{j \in J} A_j$. To complete the proof, we show that $F = X$. For all $e \in F$, we have that $e \in C_e \subseteq X_e \subseteq X$, so $F \subseteq X$. Conversely, 
$$
X = \bigcup_{e \in F} X_e \subseteq \bigcup_{e \in F} \cl(C_e) \subseteq \cl(F) = F \,.
$$ 
Therefore, $F = X$, as desired.
\end{proof}

\subsection{Presentations}
A transversal matroid can have many different presentations. The next lemma will be useful to move between different presentations.

\begin{lemma} \label{lem: move between presentation}
Let $M$ be a transversal matroid with presentation $\mathcal A = (A_1, A_2, \ldots, A_r)$. Let $i \in [r]$, and let $B_i \subseteq E(M)$ such that $\cl^*(B_i) = \cl^*(A_i)$. Furthermore, suppose that if $B \subseteq E(M)$ with $B_i \subseteq B$ and $A_i \not\subseteq B$, then $r^*_M(B) < |B| - |\mathcal A(B)|$. Then,
$$
\mathcal A' = (A_1, A_2, \ldots, A_{i-1}, B_i, A_{i+1}, A_{i+2}, \ldots, A_r)
$$
is a presentation of $M$.
\end{lemma}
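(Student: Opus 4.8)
The goal is to show $M[\mathcal A'] = M[\mathcal A] = M$, equivalently that $\mathcal A$ and $\mathcal A'$ have the same partial transversals. As the dual side is where Lemmas~\ref{lem: rank_general}--\ref{lem: flats} live, I would work with $N := (M[\mathcal A])^* = M^*$ and $N' := (M[\mathcal A'])^*$, which are duals of transversal matroids with presentations $\mathcal A$ and $\mathcal A'$ respectively; it suffices to show $N = N'$. We may assume $|\mathcal A| = r(M)$ (otherwise first trim $\mathcal A$ to a presentation of size $r(M)$ retaining $A_i$), so that Lemmas~\ref{lem: rank_general}, \ref{lem: rank} and \ref{lem: independent} apply to $N$. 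By Lemma~\ref{lem: independent}, a set $X$ is independent in $N$ exactly when $\left|X \cap \bigcup_{j \in J} A_j\right| \le \left|\bigcup_{j \in J} A_j\right| - |J|$ for all $J \subseteq [r]$, and likewise for $N'$ with $\mathcal A'$; since $A'_j = A_j$ for every $j \ne i$, these two systems of inequalities agree on all $J$ with $i \notin J$, so the whole comparison reduces to the inequalities indexed by sets $J$ containing $i$. For such a $J$ I write $U_J := \bigcup_{j \in J \setminus \{i\}} A_j$, so that the $N$-inequality becomes $|X \cap (U_J \cup A_i)| \le |U_J \cup A_i| - |J|$ and the $N'$-inequality becomes $|X \cap (U_J \cup B_i)| \le |U_J \cup B_i| - |J|$.

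Two steps then remain. First I would verify that $\mathcal A'$ has a transversal, which by Lemma~\ref{lem: Hall} amounts to $|U_J \cup B_i| \ge |J|$ for all $J \ni i$ (the remaining Hall inequalities are inherited from $\mathcal A$). As $\mathcal A$ has a transversal, $|U_J| \ge |J| - 1$; if $|U_J \cup B_i|$ were equal to $|J| - 1$, then $|U_J| = |J| - 1$ and $B_i \subseteq U_J$, forcing $A_i \not\subseteq U_J$ (else $|U_J \cup A_i| = |J| - 1 < |J|$, contradicting Hall for $\mathcal A$), so the hypothesis applied with $B = U_J$ would give $r^*_M(U_J) < |U_J| - |\mathcal A(U_J)| \le 0$ (using $\mathcal A(U_J) \supseteq J \setminus \{i\}$), which is absurd; hence $|U_J \cup B_i| \ge |J|$. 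Consequently $r(M[\mathcal A']) = |\mathcal A'| = r$, so Lemma~\ref{lem: independent} applies to $N'$ too and $r(N) = r(N')$. Second, I would show that every set independent in $N$ is independent in $N'$: given such an $X$ and a set $J \ni i$, put $B := U_J \cup B_i$, so that $|X \cap B| = r_N(X \cap B) \le r_N(B) = r^*_M(B)$. If $A_i \subseteq B$ then $J \subseteq \mathcal A(B)$, so $r^*_M(B) \le |B| - |\mathcal A(B)| \le |B| - |J|$ by Lemma~\ref{lem: rank_general}; if $A_i \not\subseteq B$ then, as $B_i \subseteq B$, the hypothesis gives $r^*_M(B) < |B| - |\mathcal A(B)| \le |B| - (|J| - 1)$, again forcing $r^*_M(B) \le |B| - |J|$. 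Either way $|X \cap B| \le |B| - |J|$, which is exactly the $N'$-inequality for $J$, so $X$ is independent in $N'$.

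The remaining, and harder, task is the converse inclusion — that every set independent in $N'$ is independent in $N$ — and here the hypothesis $\cl^*_M(B_i) = \cl^*_M(A_i)$ must finally be used in an essential way (the rank condition still being needed for bookkeeping). Attempting to mirror the previous step, for $J \ni i$ one now needs $r_{N'}(U_J \cup A_i) \le |U_J \cup A_i| - |J|$, and the natural superset to feed into Lemma~\ref{lem: rank_general} for $N'$ is $U_J \cup A_i \cup B_i$, which is too large unless $B_i \subseteq U_J \cup A_i$; getting past this requires using $B_i \subseteq \cl^*_M(A_i) \subseteq \cl_N(U_J \cup A_i)$, i.e.\ genuinely invoking the co-closure hypothesis. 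I expect the clean way to organise this is through cyclic flats: by Lemma~\ref{lem: flats}, every cyclic flat of $N$ has the form $\bigcup_{j \in J} A_j$ and every cyclic flat of $N'$ has the form $\bigcup_{j \in J} A'_j$, and the plan is to show $\mathcal Z(N) = \mathcal Z(N')$ with $r_N$ and $r_{N'}$ agreeing on this common family — the only place the two presentations could produce a different cyclic flat, or (via Lemma~\ref{lem: rank}) assign it a different rank, is the one built using the $i$-th set, and there the equality $\cl^*_M(B_i) = \cl^*_M(A_i)$, together with the rank condition to handle the sets that are not flats, is precisely what forces agreement — whereupon $N = N'$ follows from the standard fact that a matroid is determined by its cyclic flats together with their ranks. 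Making this cyclic-flat comparison precise — in particular ruling out a set $\bigcup_{j \in J} A'_j$ being a cyclic flat of $N'$ without being a flat of $N$, and doing so without tacitly presupposing $N = N'$ — is the step I anticipate will absorb most of the work.
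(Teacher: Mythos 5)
Your opening reductions are fine, and your second and third steps are correct: the Hall verification for $\mathcal A'$ and the inclusion ``independent in $M^*$ implies independent in $(M[\mathcal A'])^*$'' both go through, and they use the hypothesis $r^*_M(B) < |B| - |\mathcal A(B)|$ in essentially the same way the paper does. But the proposal is not a proof: the converse inclusion is left as a plan, and you yourself flag the obstruction (establishing $\mathcal Z(N) = \mathcal Z(N')$ without presupposing $N = N'$) without resolving it. Showing only that every independent set of $N$ is independent in $N'$ gives $r_{N'} \geq r_N$ pointwise, which does not by itself force $N = N'$. So the part of the argument where the hypothesis $\cl^*(B_i) = \cl^*(A_i)$ must actually be used is missing, and the cyclic-flat route you sketch for it is considerably heavier than what is needed.

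The paper closes exactly this gap with a one-line witness, by comparing corank functions rather than chasing two inclusions of independent sets. By Lemma~\ref{lem: rank_general}, $r^*_{M[\mathcal A']}(X) = \min_{Y \supseteq X}\bigl(|Y| - |\mathcal A'(Y)|\bigr)$, where $\mathcal A'(Y)$ counts the sets of $\mathcal A'$ contained in $Y$. Your step 3, recast in rank form, is the statement that $|Y| - |\mathcal A'(Y)| \geq r^*_M(Y) \geq r^*_M(X)$ for every $Y \supseteq X$, giving $r^*_{M[\mathcal A']}(X) \geq r^*_M(X)$. For the reverse inequality --- your missing direction --- one evaluates the minimand at $Y = \cl^*_M(X)$: since $\cl^*(A_i) = \cl^*(B_i)$, a flat of $M^*$ contains $A_i$ if and only if it contains $B_i$, so $|\mathcal A'(\cl^*_M(X))| = |\mathcal A(\cl^*_M(X))|$, and Lemma~\ref{lem: rank} gives $|\cl^*_M(X)| - |\mathcal A(\cl^*_M(X))| = r^*_M(X)$. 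Hence $r^*_{M[\mathcal A']}(X) \leq r^*_M(X)$, and the two corank functions, and therefore the two matroids, coincide. That single use of the co-closure hypothesis replaces the entire cyclic-flat comparison you anticipated. (A shared caveat: both arguments invoke Lemmas~\ref{lem: rank_general} and \ref{lem: rank}, which require $|\mathcal A| = r(M)$; your proposed fix of ``trimming'' $\mathcal A$ does not obviously preserve the hypothesis on sets $B$ with $B_i \subseteq B$ and $A_i \not\subseteq B$, so that reduction would itself need justification.)
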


\begin{proof}
For a set $X \subseteq E(M)$, define $\mathcal A(X) = \{j \in [r] : A_j \subseteq X\}$. Similarly, let $\mathcal A'(X)$ be $\mathcal A(X) \cup \{i\}$ if $B_i \subseteq X$, and $\mathcal A(X) - i$ otherwise.	
	
Now, let $N = M[\mathcal A']$. We show that $N = M$ by showing that $r^*_N(X) = r^*_M(X)$ for all $X \subseteq E(M)$. First, we show that $|X| - |\mathcal A'(X)| \geq r^*_M(X)$. If $X$ contains neither $A_i$ nor $B_i$, or both $A_i$ and $B_i$, then, by Lemma~\ref{lem: rank_general},
\begin{align*}
	|X| - |\mathcal A'(X)| &= |X| - |\mathcal A(X)| \\
	&\geq r^*_M(X) \,.
\end{align*}
If $A_i \subseteq X$ and $B_i \not\subseteq X$, then
\begin{align*}
	|X| - |\mathcal A'(X)| &= |X| - |\mathcal A(X)| + 1 \\
	&> r^*_M(X) \,.
\end{align*}
Finally, if $A_i \not\subseteq X$ and $B_i \subseteq X$, then $r^*_M(X) < |X| - |\mathcal A(X)|$. Thus,
\begin{align*}
	|X| - |\mathcal A'(X)| &= |X| - |\mathcal A(X)| - 1 \\
	&\geq r^*_M(X) \,.
\end{align*}

Now, $\cl^*(B_i) = \cl^*(A_i)$, which means that $\cl^*_M(X)$ either contains neither $A_i$ nor $B_i$, or both $A_i$ and $B_i$. Thus, Lemma~\ref{lem: rank} implies that
\begin{align*}
	r^*_M(X) &= |\cl^*_M(X)| - |\mathcal A(\cl^*_M(X))| \\
	&= |\cl^*_M(X)| - |\mathcal A'(\cl^*_M(X))| \,.
\end{align*}

Therefore, for all $Y \supseteq X$, we have that $|Y| - |\mathcal A'(Y)| \geq r_M^*(X)$, with equality if $Y = \cl^*_M(X)$. Hence, by Lemma~\ref{lem: rank_general}, 
\begin{align*}
r_N^*(X) &= \min_{Y \supseteq X}\left(|Y| - |\mathcal A'(Y)|\right) \\
&= r^*_M(X) \,.
\end{align*}
\end{proof}

A \emph{maximal presentation} of a transversal matroid $M$ is a presentation in which each set is a cyclic flat of $M^*$. Every transversal matroid has a unique maximal presentation \cite{bon72}. The next lemma shows how to obtain this maximal presentation. It can be seen from repeated application of Lemma~\ref{lem: move between presentation}, taking $B_i = \cl^*(A_i)$, or from \cite[Lemma 3]{bon72}.

\begin{lemma} \label{lem: closure is maximal}
Let $M$ be a transversal matroid with presentation $(A_1, A_2, \ldots, A_r)$ such that $r = r(M)$. Then the maximal presentation of $M$ is $(\cl^*(A_1), \cl^*(A_2), \ldots, \cl^*(A_r))$.
\end{lemma}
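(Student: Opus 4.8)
The plan is to verify directly that $(\cl^*(A_1),\ldots,\cl^*(A_r))$ meets the definition of the maximal presentation, namely that it is a presentation of $M$ and that each of its sets is a cyclic flat of $M^*$; the lemma then follows since the maximal presentation is unique.

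For the first point I would apply Lemma~\ref{lem: move between presentation} $r$ times, replacing $A_1$ by $\cl^*(A_1)$, then $A_2$ by $\cl^*(A_2)$, and so on. At each stage the hypotheses of Lemma~\ref{lem: move between presentation} with $B_i=\cl^*(A_i)$ hold: idempotence of closure gives $\cl^*(B_i)=\cl^*(A_i)$, and the remaining hypothesis is vacuous because $A_i\subseteq\cl^*(A_i)=B_i$, so there is no set $B$ with $B_i\subseteq B$ and $A_i\not\subseteq B$. Since each application produces a presentation of the same matroid $M$, the operator $\cl^*$ (computed in $M^*$) does not change between stages, so after $r$ steps we conclude that $\mathcal C:=(\cl^*(A_1),\ldots,\cl^*(A_r))$ is a presentation of $M$.

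For the second point, each $\cl^*(A_i)$ is a flat of $M^*$ by definition of closure, so it remains to show it is a union of circuits of $M^*$. Fix $i$, write $F=\cl^*(A_i)$, and suppose for contradiction that some $f\in F$ is a coloop of $M^*|F$, so that $r^*(F-f)=r^*(F)-1$. As $r^*(F)=r^*(A_i)$, if $f\notin A_i$ then $A_i\subseteq F-f$ would give $r^*(F-f)\ge r^*(A_i)=r^*(F)$, a contradiction; hence $f\in A_i$. Since $F$ is a flat of $M^*$ and $f$ is a coloop of $M^*|F$, the set $F-f$ is also a flat of $M^*$. Now apply Lemma~\ref{lem: rank} to the co-transversal matroid $M^*$, which has presentation $\mathcal A$ (the required hypothesis $r=r^*(M^*)$ is just $r=r(M)$), to each of the flats $F$ and $F-f$; subtracting the two resulting rank formulas and using $r^*(F)-r^*(F-f)=1$ yields $|\{j\in[r]:A_j\subseteq F\}|=|\{j\in[r]:A_j\subseteq F-f\}|$. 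This is impossible, since $i$ belongs to the first index set (as $A_i\subseteq F$) but not the second (as $f\in A_i$, so $A_i\not\subseteq F-f$). Hence no element of $F$ is a coloop of $M^*|F$, so $F$ is a cyclic flat of $M^*$.

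Thus $\mathcal C$ is a presentation of $M$ in which every set is a cyclic flat of $M^*$, i.e.\ a maximal presentation, and by uniqueness it is the maximal presentation. I expect the main obstacle to be the second point: one must rule out that passing to closures in $M^*$ inadvertently creates a coloop in one of the new sets. The crucial observation making this work is that such a coloop would have to lie inside the original set $A_i$, after which Lemma~\ref{lem: rank}, applied to $F$ and $F-f$, forces an incompatibility between the unit drop in rank and the strict drop in the number of presentation sets contained in the flat.
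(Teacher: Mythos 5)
Your proof is correct and its first half is exactly the route the paper indicates: repeated application of Lemma~\ref{lem: move between presentation} with $B_i = \cl^*(A_i)$, whose second hypothesis is indeed vacuous since $A_i \subseteq \cl^*(A_i)$. The paper gives only that one-line pointer (otherwise deferring to its cited reference), so your second half --- using Lemma~\ref{lem: rank} on the flats $F$ and $F-f$ to show each $\cl^*(A_i)$ is a cyclic flat of $M^*$, and hence that the resulting presentation is \emph{the} maximal one --- correctly supplies a step the paper leaves implicit.
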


Observe that Theorem~\ref{thm: contraction} and Theorem~\ref{thm: construction} require that the rank of the transversal matroid is equal to the number of sets in its presentation. This does not limit their usefulness --- every transversal matroid has such a presentation. Furthermore, if a transversal matroid $M$ has a presentation which does not have $r(M)$ sets, then $M$ necessarily contains a co-loop \cite[Lemma 2.3]{bon15}.

\subsection{Mason's $\alpha$-Criterion}
The basis of the proofs of Theorem~\ref{thm: contraction} and Theorem~\ref{thm: construction} is a necessary and sufficient condition for a matroid to be co-transversal due to Mason \cite{mas72}. For a matroid $M$, this condition works by determining, if $M$ were to be co-transversal, the number of times each set would have to appear in a presentation of its dual.

More precisely, consider a co-transversal matroid $M$ whose dual has maximal presentation $\mathcal A$. Each set of $\mathcal A$ is a cyclic flat $F$ of $M$. By Lemma~\ref{lem: rank}, the number of sets of $\mathcal A$ contained within $F$ is equal to $|F| - r(F)$. Hence, if $F$ properly contains $k$ sets of $\mathcal A$, then the number of times $F$ itself appears in $\mathcal A$ is $|F| - r(F) - k$. This motivates the recursive definition of the $\alpha$-function for a matroid $M$ and a subset $X \subseteq E(M)$ given below:
$$
\alpha_M(X) = |X| - r(X) - \sum_{\substack{F \in \mathcal Z(M) \\ F \subset X}} \alpha_M(F)
$$

Using the ideas discussed above, we can use $\alpha_M$ to construct the maximal presentation of a transversal matroid, as in the next lemma.

\begin{lemma} \label{lem: alpha_construction}
Let $M$ be the dual of a transversal matroid with maximal presentation $\mathcal A = (A_1, A_2, \ldots, A_r)$. If $F \in \mathcal Z(M)$, then there are exactly $\alpha_M(F)$ integers $i \in [r]$ such that $A_i = F$.
\end{lemma}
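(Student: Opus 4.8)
The plan is to prove Lemma~\ref{lem: alpha_construction} by strong induction on $|F|$, showing that for each cyclic flat $F$ the number of indices $i$ with $A_i = F$ is exactly the recursively-defined quantity $\alpha_M(F)$. The base case is $F = \emptyset$: if $\emptyset$ is a cyclic flat then it is a union of (empty) circuits, i.e. a set of loops of $M$, which correspond to coloops of $M^*$; since $\mathcal A$ is a maximal presentation and $r = r^*(M)$, the empty set can appear in $\mathcal A$ only as $A_i = \emptyset$, and the number of such appearances equals $|\emptyset| - r(\emptyset) = 0$ minus nothing, which I would instead phrase as: $\alpha_M(\emptyset) = |\emptyset| - r(\emptyset) = 0$ when there are no loops, and more generally the count of empty sets in $\mathcal A$ equals $-r(\emptyset)$... — so I need to be slightly careful here. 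Actually the cleanest base case treatment is to note that $\mathcal A$ being maximal forces every set to be a cyclic flat of $M$, hence a union of circuits; I would let $n(F)$ denote the number of $i \in [r]$ with $A_i = F$ and simply prove $n(F) = \alpha_M(F)$ for all $F \in \mathcal Z(M)$ by induction on $|F|$, treating the smallest cyclic flat(s) directly.

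The inductive step is the heart of the argument. Fix $F \in \mathcal Z(M)$ and assume $n(F') = \alpha_M(F')$ for every cyclic flat $F' \subsetneq F$. By Lemma~\ref{lem: rank}, applied to the flat $F$ of $M$ (the dual of the transversal matroid $M[\mathcal A]$), we have $r(F) = |F| - |\{i \in [r] : A_i \subseteq F\}|$, so
$$
|\{i \in [r] : A_i \subseteq F\}| = |F| - r(F) \,.
$$
Now I would partition the index set $\{i : A_i \subseteq F\}$ according to the value of $A_i$. Since $\mathcal A$ is a maximal presentation, each such $A_i$ is itself a cyclic flat of $M$ contained in $F$. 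Splitting off the indices with $A_i = F$ from those with $A_i \subsetneq F$ gives
$$
|F| - r(F) = n(F) + \sum_{\substack{F' \in \mathcal Z(M) \\ F' \subsetneq F}} n(F') \,.
$$
Applying the inductive hypothesis to each proper cyclic subflat $F'$, this becomes $|F| - r(F) = n(F) + \sum_{F' \subsetneq F} \alpha_M(F')$, which rearranges to $n(F) = |F| - r(F) - \sum_{F' \subsetneq F} \alpha_M(F') = \alpha_M(F)$, exactly matching the recursive definition of $\alpha_M$.

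The one genuine subtlety, and the step I expect to require the most care, is the claim that every index $i$ with $A_i \subseteq F$ has $A_i \in \mathcal Z(M)$ — i.e. that in a maximal presentation every set is a cyclic flat of the dual of $M[\mathcal A]$, which is $M$ itself. This is precisely the definition of ``maximal presentation'' recalled just before Lemma~\ref{lem: closure is maximal}, combined with the fact (also stated there, from \cite{bon72}) that the maximal presentation exists and has its sets equal to cyclic flats of $M$; so I would simply invoke that definition rather than reprove it. A secondary point to verify is that the hypothesis $r = r^*(M)$ needed for Lemma~\ref{lem: rank} holds: since $\mathcal A$ is the maximal presentation of the transversal matroid $M[\mathcal A] = M^*$ and (as noted in the excerpt, via \cite[Lemma 2.3]{bon15}) a maximal presentation has exactly $r^*(M)$ sets unless $M^*$ has a coloop — and one can reduce to or directly handle that case since a coloop of $M^*$ is a loop of $M$, contained in no cyclic flat and contributing an empty set that can be dropped. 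With these bookkeeping points settled, the induction closes cleanly.
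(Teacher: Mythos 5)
Your proof is correct and is precisely the argument the paper itself sketches in the paragraph motivating the definition of $\alpha_M$ (the paper defers the formal proof to \cite{ing73}, \cite{kun78}, and \cite{bon15}): apply Lemma~\ref{lem: rank} to the cyclic flat $F$, partition $\{i \in [r] : A_i \subseteq F\}$ by the value of $A_i$ using maximality of $\mathcal A$, and induct on the proper cyclic subflats. The only blemish is the garbled aside about $F = \emptyset$, but as you eventually note, minimal cyclic flats are handled by the same computation with an empty sum, so nothing is lost.
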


Lemma~\ref{lem: alpha_construction} immediately implies that, if $M$ is co-transversal, then $\alpha_M(F) \geq 0$ for all $F \in \mathcal Z(M)$. In fact, $\alpha_M(X) \geq 0$ for all $X \subseteq E(M)$, and co-transversal matroids are precisely the matroids for which this is true.

\begin{lemma} \label{lem: alpha}
A matroid $M$ is co-transversal if and only if $\alpha_M\left(X\right) \geq 0$ for all $X \subseteq E(M)$.
\end{lemma}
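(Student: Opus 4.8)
The plan is to prove the two implications separately. The forward implication ($M$ co-transversal $\Rightarrow$ $\alpha_M\ge 0$ everywhere) is close to a restatement of Lemma~\ref{lem: alpha_construction}, needing only an extension from cyclic flats to arbitrary subsets. The reverse implication is the substantive one: assuming $\alpha_M\ge 0$ everywhere, I would assemble an explicit presentation of $M^*$ out of the cyclic flats of $M$ and verify that it works.

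For the forward direction, let $\mathcal A=(A_1,\dots,A_r)$ be the maximal presentation of the transversal matroid $M^*$, so $r=r^*(M)$ and each $A_i$ is a cyclic flat of $M$. By Lemma~\ref{lem: alpha_construction}, each $F\in\mathcal Z(M)$ occurs exactly $\alpha_M(F)$ times in $\mathcal A$, so $\alpha_M(F)\ge 0$. Fix $X\subseteq E(M)$ with $X\notin\mathcal Z(M)$. Since each $A_i$ is a cyclic flat,
$$\left|\{i\in[r]:A_i\subseteq X\}\right|=\sum_{\substack{F\in\mathcal Z(M)\\ F\subseteq X}}\alpha_M(F)=\sum_{\substack{F\in\mathcal Z(M)\\ F\subset X}}\alpha_M(F)\,,$$
the last equality because $X\notin\mathcal Z(M)$. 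Lemma~\ref{lem: rank_general} with $Y=X$ gives $r(X)\le|X|-\left|\{i:A_i\subseteq X\}\right|$, and feeding this into the recursion defining $\alpha_M(X)$ yields $\alpha_M(X)\ge|X|-r(X)-(|X|-r(X))=0$.

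For the reverse direction, assume $\alpha_M(X)\ge 0$ for all $X$. I would first record two elementary facts: (a) the recursion immediately gives $\sum_{G\in\mathcal Z(M),\,G\subseteq H}\alpha_M(G)=|H|-r(H)$ whenever $H\in\mathcal Z(M)$; and (b) if $F$ is a flat of $M$, then the union $Z_F$ of all circuits contained in $F$ is a cyclic flat, $M|F$ is the direct sum of $M|Z_F$ with a free matroid on $F\setminus Z_F$, and hence $|F|-r(F)=|Z_F|-r(Z_F)$. Let $\mathcal A$ be the sequence of subsets of $E(M)$ in which each cyclic flat $F$ appears exactly $\alpha_M(F)$ times (a nonnegative integer by hypothesis). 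Every cyclic flat of $M$ is contained in $Z_{E(M)}$, so (a) and (b) give $|\mathcal A|=\sum_{F\in\mathcal Z(M)}\alpha_M(F)=|Z_{E(M)}|-r(Z_{E(M)})=|E(M)|-r(M)=r^*(M)$. Next, for any index set $J$, the set $F=\bigcup_{j\in J}A_j$ is a union of cyclic flats, hence of circuits, and $\left|\{i:A_i\subseteq F\}\right|=\sum_{G\in\mathcal Z(M),\,G\subseteq F}\alpha_M(G)$ is at most $|F|-r(F)\le|F|$ (by (a) if $F\in\mathcal Z(M)$, and because $\alpha_M(F)\ge 0$ otherwise); as $J\subseteq\{i:A_i\subseteq F\}$, Hall's condition (Lemma~\ref{lem: Hall}) holds, so $\mathcal A$ has a transversal and $r(M[\mathcal A])=|\mathcal A|=r^*(M)$. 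Therefore Lemma~\ref{lem: rank_general} applies to $N:=M[\mathcal A]^*$, giving
$$r_N(X)=\min_{Y\supseteq X}\Bigl(|Y|-\sum_{\substack{G\in\mathcal Z(M)\\ G\subseteq Y}}\alpha_M(G)\Bigr)\qquad\text{for all }X\subseteq E(M)\,.$$

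To finish, I would show $r_N(X)=r_M(X)$ for all $X$, forcing $N=M$ and hence that $M^*=M[\mathcal A]$ is transversal. For $r_N(X)\ge r_M(X)$: for each $Y\supseteq X$ the inner sum is at most $|Y|-r(Y)\le|Y|-r_M(X)$ by (a) and $\alpha_M(Y)\ge 0$, so every term of the minimum is at least $r_M(X)$. For the reverse inequality, take the witness $Y=X\cup Z$ with $Z=Z_{\cl_M(X)}$; every circuit inside $\cl_M(X)$ lies in $Z$, so the cyclic flats contained in $Y$ are exactly those contained in $Z$, the inner sum is $|Z|-r(Z)$, and the term equals $|X\setminus Z|+r(Z)$. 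Writing $M|\cl_M(X)$ as the direct sum of $M|Z$ and a free matroid on $\cl_M(X)\setminus Z$, and using that $X$ spans $\cl_M(X)$, one gets $r_M(X)=r(Z)+|X\setminus Z|$, which matches. I expect the main obstacle to be this reverse direction, and in particular the bookkeeping around coloops: the naive witness $Y=\cl_M(X)$ fails exactly when $X$ meets the coloops of $M|\cl_M(X)$, which is why one passes to the union of circuits $Z$ rather than the closure — the same reason the identity $|\mathcal A|=r^*(M)$ must be routed through $Z_{E(M)}$ rather than $E(M)$.
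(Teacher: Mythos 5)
Your argument is correct, but note that the paper does not prove this lemma at all: it is Mason's $\alpha$-criterion, stated with a pointer to the literature (Ingleton, Kung, and Bonin--Kung--Noble are cited). What you have written is essentially a self-contained reconstruction of the standard proof from those sources: the forward direction via Lemma~\ref{lem: alpha_construction} together with Lemma~\ref{lem: rank_general} applied with $Y=X$, and the converse by forming the candidate presentation in which each cyclic flat $F$ appears $\alpha_M(F)$ times, checking Hall's condition so that Lemma~\ref{lem: rank_general} applies, and then matching rank functions. The two points where such proofs usually go wrong --- coloops forcing you to route the counting through the union of circuits $Z_F$ rather than through $F$ or $\cl_M(X)$ itself, and the distinction between $\sum_{F\subseteq X}$ and $\sum_{F\subset X}$ when $X$ is or is not a cyclic flat --- are both handled correctly. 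The only cosmetic quibble is that in the lower bound $r_N(X)\ge r_M(X)$ the inequality $\sum_{G\in\mathcal Z(M),\,G\subseteq Y}\alpha_M(G)\le |Y|-r(Y)$ needs the case split on whether $Y$ is a cyclic flat (using (a)) or not (using $\alpha_M(Y)\ge 0$); your parenthetical gestures at this but could be spelled out.
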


Proofs of Lemmas~\ref{lem: alpha_construction} and \ref{lem: alpha} can be found in \cite{ing73}, \cite{kun78}, or \cite{bon15}.
	
\section{Single-Element Contractions} \label{contractions}

In this section, we prove Theorems~\ref{thm: contraction} and \ref{thm: construction}. Throughout, let $M$ be the dual of a transversal matroid with presentation $\mathcal A = (A_1, A_2, \ldots, A_r)$ such that $r = r(M)$. Order the sets of $\mathcal A$ such that there exists $k \in [r]$ with $e \in A_i$ for all $i \leq k$ and $e \notin A_j$ for all $j > k$. Let $e \in E(M)$ be the element of $M$ which we are trying to delete. For ease of reading, we define the following notation for a set $X \subseteq E(M)$.
\begin{align*}
\mathcal A(X) &= \{i \in [r] : A_i \subseteq X\} \,, \\
\mathcal A_e(X) &= \{i \in \mathcal A(X) : e \in A_i\} \,.
\end{align*}
Let $G$ be a minimal $(e, \mathcal A)$-presenting graph on vertex set $\mathcal A_e(E(M))$ such that the identity map is an $(e, \mathcal A)$-presenting map of $G$. If $U \subseteq \mathcal A_e(E(M))$, we shall use $G[U]$ to denote the subgraph of $G$ induced by $U$.

\begin{lemma} \label{lem: flat connected}
For all cyclic flats $F \in \mathcal Z(M)$, the graph $G[\mathcal A_e(F)]$ is connected.	
\end{lemma}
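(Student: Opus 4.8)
The plan is to extract the statement directly from the definition of an $(e, \mathcal A)$-presenting graph, applied to pairs of indices inside $\mathcal A_e(F)$. Recall that, since the identity is an $(e, \mathcal A)$-presenting map of $G$, the vertex set of $G$ is $\mathcal A_e(E(M))$ and, for all distinct $u, v \in \mathcal A_e(E(M))$, the induced subgraph $G[S_{u,v}]$ is connected, where $S_{u,v} = \{w \in \mathcal A_e(E(M)) : A_w \subseteq \cl_M(A_u \cup A_v)\}$. (The closure operator $\cl^*$ in the original definition is the closure in the dual of the transversal matroid being presented, which here is $M$ itself, so it becomes $\cl_M$.) It therefore suffices to show that, for any two distinct $u, v \in \mathcal A_e(F)$, the vertices $u$ and $v$ lie in the same component of $G[\mathcal A_e(F)]$; if $\mathcal A_e(F)$ has fewer than two elements there is nothing to prove.

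So I would fix distinct $u, v \in \mathcal A_e(F)$ and establish the two containments $u, v \in S_{u,v} \subseteq \mathcal A_e(F)$. For the inclusion $S_{u,v} \subseteq \mathcal A_e(F)$: since $u, v \in \mathcal A_e(F)$ we have $A_u, A_v \subseteq F$, hence $A_u \cup A_v \subseteq F$; because $F$ is a flat of $M$ this yields $\cl_M(A_u \cup A_v) \subseteq \cl_M(F) = F$, so every $w$ with $A_w \subseteq \cl_M(A_u \cup A_v)$ has $A_w \subseteq F$, i.e.\ $w \in \mathcal A_e(F)$. For $u, v \in S_{u,v}$: we have $e \in A_u$ and $e \in A_v$, and $A_u, A_v \subseteq A_u \cup A_v \subseteq \cl_M(A_u \cup A_v)$. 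Now $G[S_{u,v}]$ is connected and contains $u$ and $v$, and $S_{u,v} \subseteq \mathcal A_e(F)$, so a path from $u$ to $v$ in $G[S_{u,v}]$ is also a path from $u$ to $v$ in $G[\mathcal A_e(F)]$. Since $u$ and $v$ were arbitrary, $G[\mathcal A_e(F)]$ is connected.

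I do not expect a genuine obstacle: the whole argument is an unwinding of the definitions, and the only points needing a little care are the identification of the dual closure operator noted above, the bookkeeping identity $\{w \in V(G) : A_w \subseteq F\} = \mathcal A_e(F)$ (which uses $V(G) = \mathcal A_e(E(M))$), and the degenerate small cases. Note that this argument never uses that $F$ is cyclic, only that it is a flat; cyclicity is imposed in the statement because it is the cyclic flats that will matter for the later arguments via Mason's $\alpha$-criterion.
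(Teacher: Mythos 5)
Your proposal is correct and follows essentially the same route as the paper's proof: fix distinct $u,v \in \mathcal A_e(F)$, note that the definition of an $(e,\mathcal A)$-presenting graph makes $G[\mathcal A_e(\cl_M(A_u \cup A_v))]$ connected, and use that $F$ is a flat to see this is a subgraph of $G[\mathcal A_e(F)]$ containing both $u$ and $v$. Your added remarks (the dual-closure identification and the fact that only flatness, not cyclicity, is used) are accurate but not points of divergence.
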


\begin{proof}
We show that, for all distinct $i, j \in \mathcal A_e(F)$, there is a path between $i$ and $j$ in $G[\mathcal A_e(F)]$. By the definition of an $(e, \mathcal A)$-presenting graph, the graph $G[\mathcal A_e(\cl_M(A_i \cup A_j))]$ is connected. Furthermore, since $F$ is a flat, we have that $\cl_M(A_i \cup A_j) \subseteq F$, so 
$G[\mathcal A_e(\cl_M(A_i \cup A_j))]$ is a subgraph of $G[\mathcal A_e(F)]$. Therefore, there is a path between $i$ and $j$ in $G[\mathcal A_e(F)]$, completing the proof.
\end{proof}

\begin{lemma} \label{lem: flat deletion}
Let $X \subseteq E(M \backslash e)$. Then
$$
r_{M \backslash e}(X) \leq |X| - |\mathcal A(X)| - \max\{|\mathcal A_e(X \cup \{e\})| - 1, 0\} \,,
$$
with equality if $X$ is a cyclic flat of $M \backslash e$.
\end{lemma}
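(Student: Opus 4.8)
The plan is to reduce the statement to a rank computation inside $M$ itself and then feed it into Lemma~\ref{lem: rank_general} and Lemma~\ref{lem: rank}. Since deletion does not change the rank of a set avoiding the deleted element, $r_{M \backslash e}(X) = r_M(X)$ for every $X \subseteq E(M \backslash e)$, so it suffices to bound $r_M(X)$. Write $m = |\mathcal A_e(X \cup \{e\})|$. The one piece of bookkeeping I would use throughout is this: because $e \notin X$, a set $A_i$ of $\mathcal A$ satisfies $A_i \subseteq X \cup \{e\}$ in exactly one of two mutually exclusive ways — either $e \notin A_i$ and $A_i \subseteq X$ (there are $|\mathcal A(X)|$ such $i$), or $e \in A_i$ and $A_i \subseteq X \cup \{e\}$ (there are $m$ such $i$). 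Hence $|\mathcal A(X \cup \{e\})| = |\mathcal A(X)| + m$.

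For the inequality, I would split on $m$. If $m \leq 1$, take $Y = X$ in Lemma~\ref{lem: rank_general} to get $r_M(X) \leq |X| - |\mathcal A(X)|$, which is exactly the claimed bound since then $\max\{m-1,0\} = 0$. If $m \geq 2$, take instead $Y = X \cup \{e\}$; Lemma~\ref{lem: rank_general} together with the counting identity gives $r_M(X) \leq (|X|+1) - (|\mathcal A(X)| + m) = |X| - |\mathcal A(X)| - (m-1)$, again the claimed bound.

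For the equality, suppose $X$ is a cyclic flat of $M \backslash e$; I would in fact only use that $X$ is a flat of $M \backslash e$, which forces $\cl_M(X) \in \{X, X \cup \{e\}\}$. If $e \notin \cl_M(X)$, then $X$ is a flat of $M$, so Lemma~\ref{lem: rank} gives $r_M(X) = |X| - |\mathcal A(X)|$, and $m = 0$: indeed $r_M(X \cup \{e\}) = r_M(X) + 1$, while Lemma~\ref{lem: rank_general} with $Y = X \cup \{e\}$ and the counting identity give $r_M(X \cup \{e\}) \leq r_M(X) + 1 - m$, so $m \leq 0$. If instead $e \in \cl_M(X)$, then $X \cup \{e\} = \cl_M(X)$ is a flat of $M$, so Lemma~\ref{lem: rank} and the counting identity give $r_M(X) = r_M(X \cup \{e\}) = |X| - |\mathcal A(X)| - (m-1)$; here $m \geq 1$, because $m = 0$ would give $r_M(X) = |X| + 1 - |\mathcal A(X)|$, contradicting $r_M(X) \leq |X| - |\mathcal A(X)|$ (Lemma~\ref{lem: rank_general} with $Y = X$). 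In both cases $\max\{m-1,0\}$ equals the correction term produced above, so equality holds.

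The argument is essentially routine once the identity $|\mathcal A(X \cup \{e\})| = |\mathcal A(X)| + m$ is available; the only step requiring any thought is the case $e \in \cl_M(X)$ in the equality part, where one must exclude $m = 0$, i.e.\ show that if $e$ lies in the $M$-closure of a flat $X$ of $M \backslash e$ then some set of $\mathcal A$ containing $e$ is already contained in $X \cup \{e\}$. I expect this short inconsistency argument to be the only real content; everything else is a direct application of the rank formulas of Section~\ref{prelims}.
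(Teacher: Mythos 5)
Your proof is correct, and the inequality half is the same computation as the paper's: apply Lemma~\ref{lem: rank_general} with $Y=X$ when $|\mathcal A_e(X\cup\{e\})|\leq 1$ and with $Y=X\cup\{e\}$ otherwise, together with the identity $|\mathcal A(X\cup\{e\})|=|\mathcal A(X)|+|\mathcal A_e(X\cup\{e\})|$. Where you diverge is the equality case. The paper splits on the value of $m=|\mathcal A_e(X\cup\{e\})|$ and then deduces closure facts: for $m=0$ it invokes Lemma~\ref{lem: flats} to rule out $X\cup\{e\}$ being a cyclic flat of $M$ and concludes $X$ is a cyclic flat of $M$; for $m\geq 1$ it asserts $e\in\cl_M(X)$ and that $X\cup\{e\}$ is a cyclic flat of $M$. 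You instead split on whether $e\in\cl_M(X)$ and \emph{derive} the value of $m$ in each case by comparing $r_M(X\cup\{e\})$ against the bound from Lemma~\ref{lem: rank_general}. This buys you two things: you never need Lemma~\ref{lem: flats} or the cyclic structure at all (only that $X$ is a flat of $M\backslash e$, so that $\cl_M(X)\in\{X,X\cup\{e\}\}$ and Lemma~\ref{lem: rank} applies to the relevant flat of $M$), and consequently you prove the slightly stronger statement that equality holds for every flat of $M\backslash e$, not just the cyclic ones. The exclusion of $m=0$ when $e\in\cl_M(X)$, which you flag as the only nontrivial step, is handled correctly by the contradiction with $r_M(X)\leq|X|-|\mathcal A(X)|$.
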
 

\begin{proof}
First, suppose that $|\mathcal A_e(X \cup \{e\})| = 0$. By Lemma~\ref{lem: rank_general},
\begin{align*}
r_{M \backslash e}(X) &= r_M(X)\\
&\leq |X| - |\mathcal A(X)| \,,
\end{align*}
with equality if $X$ is a cyclic flat of $M$. We want to show that equality holds if $X$ is a cyclic flat of $M \backslash e$. There is no $A_i \subseteq X \cup \{e\}$ with $e \in A_i$, so, by Lemma~\ref{lem: flats}, $X \cup \{e\}$ is not a cyclic flat of $M$. Therefore, if $X$ is a cyclic flat of $M \backslash e$, then $X$ is a cyclic flat of $M$, and we have the desired result.

Now suppose $|\mathcal A_e(X \cup \{e\})| \geq 1$. This means that $e \in \cl_M(X)$. Also, note that if $X$ is a cyclic flat of $M \backslash e$, then $X \cup \{e\}$ is a cyclic flat of $M$. Applying Lemma~\ref{lem: rank_general} again, we get
\begin{align*}
r_{M \backslash e}(X) &= r_M(X) \\
&= r_M(X \cup \{e\}) \\
&\leq |X \cup \{e\}| - |\mathcal A(X \cup \{e\})| \\
&= |X \cup \{e\}| - |\mathcal A(X)| - |\mathcal A_e(X \cup \{e\})| \\
&= |X| - |\mathcal A(X)| - (|\mathcal A_e(X \cup \{e\})| - 1) \,,
\end{align*}
with equality if $X \cup \{e\}$ is a cyclic flat of $M$, and thus if $X$ is a cyclic flat of $M \backslash e$. This completes the proof.
\end{proof}

\begin{lemma} \label{lem: deletion alpha}
Suppose $\mathcal A$ is a maximal presentation of $M^*$. Let $F$ be a cyclic flat of $M \backslash e$ such that the graph $G[\mathcal A_e(F \cup \{e\})]$ does not contain a cycle. 
\begin{enumerate}[(i)]
\item If $|\mathcal A_e(F \cup \{e\})| \leq 1$, then $\alpha_{M \backslash e}(F) = |\{i \in \mathcal A(F) : A_i = F\}|$.

\item If $|\mathcal A_e(F \cup \{e\})| > 1$, then $\alpha_{M \backslash e}(F)$ is equal to the number of edges $\{i,j\}$ of $G[\mathcal A_e(F \cup \{e\})]$ such that $\cl_{M \backslash e}(A_i \cup A_j - e) = F$.
\end{enumerate}
\end{lemma}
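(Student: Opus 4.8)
The plan is to prove, by induction on $|F|$, one formula covering both parts: for every cyclic flat $F$ of $M\backslash e$ with $G[\mathcal A_e(F\cup\{e\})]$ acyclic,
\[
\alpha_{M\backslash e}(F)=\bigl|\{i\in[r]:A_i=F\}\bigr|+\bigl|\{\{i,j\}\in E(G[\mathcal A_e(F\cup\{e\})]):\cl_{M\backslash e}(A_i\cup A_j-e)=F\}\bigr|.
\]
This gives (i) because when $|\mathcal A_e(F\cup\{e\})|\le 1$ the graph $G[\mathcal A_e(F\cup\{e\})]$ has no edge, while $\{i:A_i=F\}=\{i\in\mathcal A(F):A_i=F\}$; and it gives (ii) because when $|\mathcal A_e(F\cup\{e\})|\ge 1$ one has $e\in\cl_M(F)$ (as in the proof of Lemma~\ref{lem: flat deletion}), so $F$ is not a flat of $M$, hence not any $A_i$, and the first term vanishes. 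The induction is well set up: for a cyclic flat $F'\subsetneq F$ of $M\backslash e$ we have $\mathcal A_e(F'\cup\{e\})\subseteq\mathcal A_e(F\cup\{e\})$, so $G[\mathcal A_e(F'\cup\{e\})]$ is an induced subgraph of a forest and the hypothesis is inherited.

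For the inductive step I would expand $\alpha_{M\backslash e}(F)=\bigl(|F|-r_{M\backslash e}(F)\bigr)-\sum_{F'}\alpha_{M\backslash e}(F')$, the sum over cyclic flats $F'\subsetneq F$ of $M\backslash e$. Since $F$ is a cyclic flat of $M\backslash e$, Lemma~\ref{lem: flat deletion} evaluates the first part exactly as $|\mathcal A(F)|+\max\{|\mathcal A_e(F\cup\{e\})|-1,0\}$. Feeding the inductive hypothesis into the sum, the contribution of the ``$A_i=F'$'' terms is $\sum_{F'}|\{i:A_i=F'\}|=|\{i:A_i\subsetneq F\}|=|\mathcal A(F)|-|\{i:A_i=F\}|$, because (using $e\notin F$) a set is a cyclic flat of $M\backslash e$ properly inside $F$ exactly when it equals $A_i$ for some $i$ with $A_i\subsetneq F$. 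The contribution of the ``edge'' terms is the number of edges $\{i,j\}$ of $G[\mathcal A_e(F\cup\{e\})]$ for which $\cl_{M\backslash e}(A_i\cup A_j-e)$ is a cyclic flat of $M\backslash e$ properly contained in $F$: each relevant $F'$ is recovered uniquely as $\cl_{M\backslash e}(A_i\cup A_j-e)$, and $A_i-e,A_j-e\subseteq F'$ forces $i,j\in\mathcal A_e(F'\cup\{e\})$, so the edge $\{i,j\}$ does live in $G[\mathcal A_e(F'\cup\{e\})]$.

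The heart of the proof is therefore the claim that for every edge $\{i,j\}$ of $G[\mathcal A_e(F\cup\{e\})]$, the set $\cl_{M\backslash e}(A_i\cup A_j-e)$ is a cyclic flat of $M\backslash e$ contained in $F$. The containment follows from $A_i,A_j\subseteq F\cup\{e\}$. For the cyclicity, assuming $e$ is neither a loop nor a coloop of $M$ (the degenerate cases being routine), the fact that $A_i$ is a cyclic flat of $M$ containing $e$ gives $e\in\cl_M(A_i-e)$, hence $\cl_{M\backslash e}(A_i\cup A_j-e)=\cl_M(A_i\cup A_j)-e$, where $\cl_M(A_i\cup A_j)$ is a cyclic flat of $M$ containing $e$ (the closure of a union of circuits is a cyclic flat). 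What remains is to show that removing $e$ from this cyclic flat of $M$ leaves a cyclic flat of $M\backslash e$, that is, that $\cl_M(A_i\cup A_j)\setminus\{e\}$ is a union of circuits of $M$. I expect this to be the main obstacle: it amounts to excluding an element $z$ for which $\{e,z\}$ is a cocircuit of $M|\cl_M(A_i\cup A_j)$; such a $z$ must lie in $A_i\cap A_j$ with $\{e,z\}$ a cocircuit of each of $M|A_i$ and $M|A_j$, and I would rule this out using the minimality of $G$ (every edge of $G$ is a bridge of $G[\mathcal A_e(S)]$ for some cyclic flat $S$ of $M$, constraining the overlap of $A_i$ and $A_j$) together with the acyclicity of $G[\mathcal A_e(F\cup\{e\})]$.

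Granting the claim, every edge of $G[\mathcal A_e(F\cup\{e\})]$ contributes to the ``edge'' sum unless $\cl_{M\backslash e}(A_i\cup A_j-e)=F$, so that sum equals the number of edges of $G[\mathcal A_e(F\cup\{e\})]$ minus the number of edges $\{i,j\}$ with $\cl_{M\backslash e}(A_i\cup A_j-e)=F$. Now $G[\mathcal A_e(F\cup\{e\})]$ is connected: the empty case is trivial, and otherwise $F\cup\{e\}$ is a cyclic flat of $M$ and Lemma~\ref{lem: flat connected} applies; being acyclic as well, it is a tree with $\max\{|\mathcal A_e(F\cup\{e\})|-1,0\}$ edges. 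Substituting everything into $\alpha_{M\backslash e}(F)=\bigl(|F|-r_{M\backslash e}(F)\bigr)-\sum_{F'}\alpha_{M\backslash e}(F')$, the terms $|\mathcal A(F)|$ and $\max\{|\mathcal A_e(F\cup\{e\})|-1,0\}$ cancel, and what is left is exactly the right-hand side of the unified formula. Reading off cases (i) and (ii) as described completes the argument.
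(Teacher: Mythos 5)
Your argument is structurally the same as the paper's: both proceed by induction over the cyclic flats of $M\backslash e$ properly contained in $F$, evaluate $|F|-r_{M\backslash e}(F)$ via Lemma~\ref{lem: flat deletion}, feed the inductive hypothesis into the sum $\sum_{F'}\alpha_{M\backslash e}(F')$, and use Lemma~\ref{lem: flat connected} together with acyclicity to count the edges of $G[\mathcal A_e(F\cup\{e\})]$ as $\max\{|\mathcal A_e(F\cup\{e\})|-1,0\}$. Your bookkeeping (the two sums telescoping against $|\mathcal A(F)|$ and the edge count, and the observation that the two terms of your unified formula cannot both be nonzero) is correct.

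The step you flag as unresolved --- that $\cl_{M\backslash e}(A_i\cup A_j-e)$ is a cyclic flat of $M\backslash e$ for every edge $\{i,j\}$ of $G[\mathcal A_e(F\cup\{e\})]$ --- is a genuine proof obligation (the paper relies on it silently in both the base case and the inductive step), but the route you sketch, via minimality of $G$ and acyclicity of $G[\mathcal A_e(F\cup\{e\})]$, is not the right tool: the statement holds for any two sets $A_i,A_j$ of the maximal presentation that contain $e$, with no reference to the graph at all. To close it, put $Z=\cl_M(A_i\cup A_j)$, a cyclic flat of $M$ with $Z-e=\cl_{M\backslash e}(A_i\cup A_j-e)$ (since $e\in\cl_M(A_i-e)$, as $e$ is not a coloop of $M|A_i$). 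For $x\in Z-e$, apply Lemma~\ref{lem: rank_general} to $Z-\{e,x\}$: any $Y\supseteq Z-\{e,x\}$ containing $e$ or $x$ contains $Z-x$ or $Z-e$, whose rank equals $r_M(Z)$ because $Z$ is cyclic; and any $Y$ omitting both satisfies $|Y|-|\mathcal A(Y)|\geq|Y\cup\{e,x\}|-|\mathcal A(Y\cup\{e,x\})|\geq r_M(Z)$, because at least the two sets $A_i,A_j\subseteq Z\subseteq Y\cup\{e,x\}$ contain $e$ and are therefore lost from $\mathcal A(Y)$. Hence $r_M(Z-\{e,x\})=r_M(Z)=r_M(Z-e)$, so no element of $Z-e$ is a coloop of $M|(Z-e)$; thus $Z-e$ is a union of circuits avoiding $e$, and is a cyclic flat of $M\backslash e$. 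With this inserted, your proof goes through.
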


\begin{proof}
By Lemma~\ref{lem: flat deletion}, if $|\mathcal A_e(F \cup \{e\})| \leq 1$, then
\begin{align*}
\alpha_{M \backslash e}(F) &= |F| - r_{M \backslash e}(F) - \sum_{\substack{F' \in \mathcal Z(M) \\ F' \subset F}} \alpha_M(F') \\
&= |\mathcal A(F)| - \sum_{\substack{F' \in \mathcal Z(M) \\ F' \subset F}} \alpha_M(F') \,,
\end{align*}
and if $|\mathcal A_e(F \cup \{e\})| > 1$, then
$$
\alpha_{M \backslash e}(F) = |\mathcal A(F)| + |\mathcal A_e(F \cup \{e\})| - 1 - \sum_{\substack{F' \in \mathcal Z(M) \\ F' \subset F}} \alpha_M(F') \,.
$$
	
The proof is by induction on the number of cyclic flats properly contained within $F$. First, suppose there are no such cyclic flats. If $|\mathcal A(F \cup \{e\})| \leq 1$, then $\alpha_{M \backslash e}(F) = |\mathcal A(F)|$. For all $i \in \mathcal A(F)$, we have that $A_i$ is a cyclic flat, since $\mathcal A$ is maximal. Therefore, $A_i$ is not properly contained within $F$, so $A_i = F$. Hence, (i) holds.

Otherwise, $|\mathcal A(F \cup \{e\})| > 1$, so $\alpha_{M \backslash e}(F) = |\mathcal A(F)| + |\mathcal A_e(F \cup \{e\})| - 1$. Let $i \in \mathcal A(F)$. We know that $A_i \not\subseteq F$, so $A_i = F$. But now $e \in \cl_M(A_i)$, which contradicts the maximality of $\mathcal A$. Thus, $|\mathcal A(F)| = 0$. By Lemma~\ref{lem: flat connected}, the graph $G[\mathcal A_e(F \cup \{e\})]$ is a tree. Thus, the number of edges in $G[\mathcal A_e(F \cup \{e\})]$ is
$$
|\mathcal A_e(F \cup \{e\})| - 1 = \alpha_{M \backslash e}(F) \,.
$$
Furthermore, for all $i, j \in \mathcal A_e(F \cup \{e\})$, we have that $\cl_{M \backslash e}(A_i \cup A_j - e) = F$, since $F$ does not properly contain any cyclic flats. Thus, (ii) holds.

Now, suppose that the result is true for all cyclic flats $F' \subset F$. Therefore,
$$
\sum_{\substack{F' \in \mathcal Z(M) \\ F' \subset F}} \alpha_{M \backslash e}(F') = |\{i \in \mathcal A(F) : A_i \subset F\}| + k \,,
$$
where $k$ is the number of edges $\{i,j\}$ of $G[\mathcal A_e(F \cup \{e\})]$ such that $\cl_{M \backslash e}(A_i \cup A_j - e) \subset F$.

If $|\mathcal A_e(F \cup \{e\})| \leq 1$, then $k = 0$. Thus,
\begin{align*}
\alpha_{M \backslash e}(F) &= |\mathcal A(F)| - |\{i \in \mathcal A(F) : A_i \subset F\}| \\
&= |\{i \in \mathcal A(F) : A_i = F\}| \,,
\end{align*}
and (i) holds. Finally, suppose $|\mathcal A_e(F \cup \{e\})| > 1$. As before, there is no $i \in \mathcal A(F)$ such that $A_i = F$, since then $e \in \cl_M(A_i)$. Therefore,
\begin{align*}
\alpha_{M \backslash e}(F) &= |\mathcal A(F)| + |\mathcal A_e(F \cup \{e\})| - 1 - |\{i \in \mathcal A(F) : A_i \subset F\}| - k \\
&= |\mathcal A_e(F \cup \{e\})| - 1 - k \,.
\end{align*}
Since $|\mathcal A_e(F \cup \{e\})| - 1$ is the number of edges in $G[\mathcal A_e(F \cup \{e\})]$, this implies that (ii) holds, and completes the proof.
\end{proof}

The next three lemmas are concerned with showing that Theorem~\ref{thm: contraction} is true in the case where $\mathcal A$ is maximal.

\begin{lemma} \label{lem: tree sufficient}
Suppose $\mathcal A$ is a maximal presentation of $M^*$. If $G$ is a tree with edge set
$$
E(G) = \{\{i_1,j_1\},\{i_2,j_2\},\ldots,\{i_{k-1},j_{k-1}\}\}
$$
then
$$
(\cl_{M\backslash e}(A_{i_1} \cup A_{j_1} - e), \cl_{M\backslash e}(A_{i_2} \cup A_{j_2} - e), \ldots, \cl_{M \backslash e}(A_{i_{k-1}} \cup A_{j_{k-1}} - e), A_{k+1}, A_{k+2},\ldots,A_r)
$$
is a presentation of $(M \backslash e)^*$.
\end{lemma}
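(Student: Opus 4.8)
The plan is to show that the collection displayed in the statement, which I will call $\mathcal B$ and whose first $k-1$ sets I abbreviate $B_\ell := \cl_{M\backslash e}(A_{i_\ell}\cup A_{j_\ell}-e)$, coincides as a multiset with the maximal presentation of $(M\backslash e)^*$. (If $k=0$ then $e$ lies in no set of $\mathcal A$, so $e$ is a coloop of $M$, $(M\backslash e)^*=M^*\backslash e$, and $\mathcal B=\mathcal A$ presents it by Lemma~\ref{lem: deletion presentation}; so assume $k\geq 1$, in which case $e$ is not a coloop of $M$, $r(M\backslash e)=r(M)$, and $r^*(M\backslash e)=r-1=|\mathcal B|$.)

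First I would use that $G$ is a tree to pin down the maximal presentation of $(M\backslash e)^*$. Every induced subgraph of a tree is a forest, so for every cyclic flat $F$ of $M\backslash e$ the hypothesis of Lemma~\ref{lem: deletion alpha} is satisfied, and that lemma realises $\alpha_{M\backslash e}(F)$ as a nonnegative integer (a number of sets, or of edges). Since it is enough to check the criterion of Lemma~\ref{lem: alpha} on cyclic flats, $M\backslash e$ is co-transversal, and Lemma~\ref{lem: alpha_construction} then says that the maximal presentation of $(M\backslash e)^*$ contains each $F\in\mathcal Z(M\backslash e)$ with multiplicity exactly $\alpha_{M\backslash e}(F)$; in particular it has $\sum_{F\in\mathcal Z(M\backslash e)}\alpha_{M\backslash e}(F)=r^*(M\backslash e)=|\mathcal B|$ sets.

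The main step is then the bookkeeping: for each $F\in\mathcal Z(M\backslash e)$, the number of sets of $\mathcal B$ equal to $F$ is $\alpha_{M\backslash e}(F)$. Here I would split on $|\mathcal A_e(F\cup\{e\})|$ to match the two cases of Lemma~\ref{lem: deletion alpha}. A set $A_j$ with $j>k$ equals $F$ iff $A_j=F$; since $F\subseteq E(M)-e$, any $A_i$ with $A_i=F$ has $e\notin A_i$, so $\{i:A_i=F\}=\{j>k:A_j=F\}=\{i\in\mathcal A(F):A_i=F\}$. A set $B_\ell$ equals $F$ only if $A_{i_\ell}\cup A_{j_\ell}-e\subseteq F$, equivalently $A_{i_\ell},A_{j_\ell}\subseteq F\cup\{e\}$, i.e.\ the edge $\{i_\ell,j_\ell\}$ of $G$ lies in $G[\mathcal A_e(F\cup\{e\})]$; so the number of $\ell$ with $B_\ell=F$ equals the number of edges of $G[\mathcal A_e(F\cup\{e\})]$ whose closure in $M\backslash e$ is $F$. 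If $|\mathcal A_e(F\cup\{e\})|\leq 1$ there are no such edges, and the count of sets of $\mathcal B$ equal to $F$ is $|\{i\in\mathcal A(F):A_i=F\}|=\alpha_{M\backslash e}(F)$ by Lemma~\ref{lem: deletion alpha}(i). If $|\mathcal A_e(F\cup\{e\})|>1$ then $e\in\cl_M(F)$, so $F$ is not a flat of $M$, hence (as $\mathcal A$ is maximal) no $A_i$ equals $F$, and the count is the number of edges of $G[\mathcal A_e(F\cup\{e\})]$ with closure $F$, which is $\alpha_{M\backslash e}(F)$ by Lemma~\ref{lem: deletion alpha}(ii).

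Finally, summing over $F\in\mathcal Z(M\backslash e)$, the number of sets of $\mathcal B$ equal to some cyclic flat of $M\backslash e$ is $\sum_F\alpha_{M\backslash e}(F)=|\mathcal B|$, so every set of $\mathcal B$ is such a cyclic flat, and the multiset $\mathcal B$ agrees with the maximal presentation of $(M\backslash e)^*$ produced above; hence $\mathcal B$ presents $(M\backslash e)^*$. I expect the genuine work to lie entirely in the third paragraph — in particular in checking that the dichotomy on $|\mathcal A_e(F\cup\{e\})|$ genuinely separates the contributions of the ``new'' sets $B_\ell$ from those of the ``old'' sets $A_j$, so that no cyclic flat of $M\backslash e$ is over- or under-counted — whereas the surrounding argument is a routine invocation of the $\alpha$-machinery already in place.
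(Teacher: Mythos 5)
There is a genuine gap in your second paragraph, and it sits exactly where the real work of this lemma lies. You assert that ``it is enough to check the criterion of Lemma~\ref{lem: alpha} on cyclic flats,'' but Lemma~\ref{lem: alpha} requires $\alpha_{M \backslash e}(X) \geq 0$ for \emph{all} subsets $X$, and the reduction to cyclic flats is false for general matroids. For a counterexample, let $N$ be the free extension of $M(K_4)$ by a single point $p$ (rank $3$, seven elements). Its cyclic flats are $\emptyset$, the four triangles, and the ground set, with $\alpha_N$ values $0$, $1$ (four times), and $7 - 3 - 4 = 0$ respectively, all nonnegative; yet $\alpha_N(E(N) - p) = 6 - 3 - 4 = -1$, and indeed $N$ is not co-transversal since it has the non-gammoid $M(K_4)$ as a restriction. (The same example shows one cannot even restrict attention to flats.) So nonnegativity of $\alpha_{M \backslash e}$ on cyclic flats does not by itself yield co-transversality, and your proof is missing the case of sets $X$ that are not cyclic flats. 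That case is the second half of the paper's proof: for such $X$ one bounds $\sum_{F \subset X} \alpha_{M \backslash e}(F)$ from above using Lemma~\ref{lem: deletion alpha} --- the key point being that $G[\mathcal A_e(X \cup \{e\})]$ is an induced subgraph of a tree and hence has at most $|\mathcal A_e(X \cup \{e\})| - 1$ edges --- and then applies Lemma~\ref{lem: rank_general} to $X$ (or to $X \cup \{e\}$ when $e \in \cl_M(X)$) to conclude $\alpha_{M \backslash e}(X) \geq 0$. You would need to supply this argument for your proof to go through.

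The remainder of your write-up is sound and, in fact, slightly more explicit than the paper: your first paragraph's rank bookkeeping and your third paragraph's multiplicity count (matching each cyclic flat $F$ of $M \backslash e$ against either the sets $A_j$ with $j > k$ or the edges of $G[\mathcal A_e(F \cup \{e\})]$ with closure $F$, according to the dichotomy of Lemma~\ref{lem: deletion alpha}) correctly fleshes out what the paper compresses into a single appeal to Lemma~\ref{lem: alpha_construction}. But that bookkeeping only identifies the maximal presentation \emph{once co-transversality is known}, so it cannot substitute for the missing verification above.
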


\begin{proof}
To show that $M \backslash e$ is co-transversal, we need to show that $\alpha_{M \backslash e}(X) \geq 0$ for all $X \subseteq E(M)$. By Lemma~\ref{lem: deletion alpha}, this is true if $X$ is a cyclic flat. So assume $X$ is not a cyclic flat. If $|\mathcal A(F \cup \{e\})| \leq 1$, then Lemma~\ref{lem: deletion alpha} implies that
$$
\sum_{\substack{F \in \mathcal Z(M) \\ F \subset X}} \alpha_{M \backslash e}(F) = |\mathcal A(X)| \,.
$$
Now, by Lemma~\ref{lem: rank_general},
\begin{align*}
\alpha_{M \backslash e}(X) &= |X| - r_{M \backslash e}(X) - |\mathcal A(X)| \\
&= |X| - r_M(X) - |\mathcal A(X)| \\
&\geq |X| - (|X| - |\mathcal A(X)|) - |\mathcal A(X)| \\
&= 0 \,.
\end{align*}

Otherwise, $|\mathcal A(F \cup \{e\})| > 1$. Since $G[\mathcal A_e(X \cup \{e\})]$ does not contain a cycle, it has at most $|\mathcal A_e(X \cup \{e\})| - 1$ edges. Thus, applying Lemma~\ref{lem: deletion alpha}, we get
\begin{align*}
\sum_{\substack{F \in \mathcal Z(M) \\ F \subset X}} \alpha_{M \backslash e}(F) &\leq |\mathcal A(X)| + |\mathcal A_e(X \cup \{e\})| - 1 \\
&= |\mathcal A(X \cup \{e\})| - 1 \,,
\end{align*}
Therefore, using the fact that $e \in \cl_M(X)$ and Lemma~\ref{lem: rank_general},
\begin{align*}
\alpha_{M \backslash e}(X) &\geq |X| - r_{M \backslash e}(X) - |\mathcal A(X \cup \{e\})| + 1 \\
&=	|X| - r_M(X \cup \{e\}) - |\mathcal A(X \cup \{e\})| + 1 \\
&\geq |X| - (|X \cup \{e\}| - |\mathcal A(X \cup \{e\})|) - |\mathcal A(X \cup \{e\})| + 1 \\
&= 0 \,.
\end{align*}
Thus, $\alpha_{M \backslash e}(X) \geq 0$, so $M \backslash e$ is co-transversal, and Lemma~\ref{lem: alpha_construction} implies that $(M \backslash e)^*$ has the desired presentation.
\end{proof}

\begin{lemma} \label{lem: tree necessary}
Suppose $\mathcal A$ is a maximal presentation of $M^*$. If $G$ has a cycle, then $M \backslash e$ is not co-transversal.
\end{lemma}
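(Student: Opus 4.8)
The plan is to apply Mason's $\alpha$-criterion in the form of Lemma~\ref{lem: alpha}: it suffices to exhibit a single set $X\subseteq E(M\backslash e)$ with $\alpha_{M\backslash e}(X)<0$, and the set I would use is a suitably chosen cyclic flat. First I would note that there is at least one cyclic flat of $M\backslash e$ whose presenting subgraph still carries a cycle: taking $F_{\max}=\cl_{M\backslash e}\big(\bigcup_{i\in V(G)}(A_i-e)\big)$, the set $F_{\max}\cup\{e\}=\cl_M\big(\bigcup_{i\in V(G)}A_i\big)$ is a cyclic flat of $M$ in which $e$ is not a coloop (each $A_i$ with $e\in A_i$ contains a circuit of $M$ through $e$), so $F_{\max}\in\mathcal Z(M\backslash e)$; moreover $\mathcal A_e(F_{\max}\cup\{e\})=V(G)$ since $V(G)=\mathcal A_e(E(M))$, so $G[\mathcal A_e(F_{\max}\cup\{e\})]=G$ has a cycle. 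I would then let $F$ be a cyclic flat of $M\backslash e$ that is minimal under inclusion subject to $G[\mathcal A_e(F\cup\{e\})]$ containing a cycle.

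The heart of the argument is to evaluate $\alpha_{M\backslash e}(F)$. Because $F$ is minimal, every cyclic flat $F'\subsetneq F$ of $M\backslash e$ has $G[\mathcal A_e(F'\cup\{e\})]$ cycle-free, so Lemma~\ref{lem: deletion alpha} computes $\alpha_{M\backslash e}(F')$ for all of them; substituting these into the recursion for $\alpha_{M\backslash e}(F)$ and running the same computation as in the inductive step of the proof of Lemma~\ref{lem: deletion alpha} (using Lemma~\ref{lem: flat deletion} to evaluate $r_{M\backslash e}(F)$, and the maximality of $\mathcal A$ to see that no $A_i$ with $i\in\mathcal A(F)$ equals $F$) yields
$$\alpha_{M\backslash e}(F)=|\mathcal A_e(F\cup\{e\})|-1-k,$$
where $k$ is the number of edges $\{i,j\}$ of $H:=G[\mathcal A_e(F\cup\{e\})]$ with $\cl_{M\backslash e}(A_i\cup A_j-e)\subsetneq F$. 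Since $H$ is connected by Lemma~\ref{lem: flat connected}, writing $c=|E(H)|-|V(H)|+1\ge1$ for its cyclomatic number and calling an edge $\{i,j\}$ of $H$ \emph{full} when $\cl_{M\backslash e}(A_i\cup A_j-e)=F$, this rearranges to
$$\alpha_{M\backslash e}(F)=(\text{number of full edges of }H)-c.$$
So it remains to show that $H$ has fewer than $c$ full edges.

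Here I would combine minimality of $G$ with minimality of $F$. Minimality of $G$ shows that no full edge of $H$ lies on a cycle of $H$: if $\{i,j\}$ did, then deleting $\{i,j\}$ from $G$ would still be an $(e,\mathcal A)$-presenting graph, since the only induced subgraphs $G[\mathcal A_e(\cl_M(A_u\cup A_v))]$ that could become disconnected are those with $\cl_M(A_u\cup A_v)\not\supseteq F\cup\{e\}$ (otherwise they contain $H$, hence the cycle through $\{i,j\}$), and for such a subgraph $\cl_{M\backslash e}(A_i\cup A_j-e)\subseteq\cl_M(A_u\cup A_v)-e\subsetneq F$, contradicting fullness. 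Hence every full edge is a cut edge of $H$, and I would want to conclude that $H$ has no full edge at all, giving $\alpha_{M\backslash e}(F)=-c\le-1<0$ and, via Lemma~\ref{lem: alpha}, that $M\backslash e$ is not co-transversal. The main obstacle is exactly this last step. If $\{i,j\}$ is a full cut edge, deleting it splits $H$ into two parts, and the part $P$ carrying the (nonempty) cyclic part of $H$ gives a cyclic flat $F_P:=\cl_{M\backslash e}\big(\bigcup_{l\in P}(A_l-e)\big)\subseteq F$ with $P\subseteq\mathcal A_e(F_P\cup\{e\})$, so $G[\mathcal A_e(F_P\cup\{e\})]$ contains a cycle and minimality of $F$ forces $F_P=F$; the delicate point is to rule this degeneracy out. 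I expect to do so by a second appeal to minimality of $G$ at $\{i,j\}$ — when $F_P=F$ one has $A_l-e\subseteq F_P$ for every $l\in V(H)$, so any pair $(u,v)$ witnessing that $\{i,j\}$ is needed in $G$ forces $\cl_M(A_u\cup A_v)\supseteq F\cup\{e\}$, so $G[\mathcal A_e(\cl_M(A_u\cup A_v))]\supseteq H$ and the two endpoints of $\{i,j\}$ remain joined after its deletion, a contradiction — but carefully tracking the interaction between $F\cup\{e\}$, the flats $\cl_M(A_u\cup A_v)$, and the two sides of the cut edge (and possibly refining the choice of $F$) is where the work lies.
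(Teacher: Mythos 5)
Your overall strategy coincides with the paper's: take a cyclic flat $F$ of $M\backslash e$ minimal subject to $H:=G[\mathcal A_e(F\cup\{e\})]$ containing a cycle, use Lemmas~\ref{lem: flat deletion} and \ref{lem: deletion alpha} to get $\alpha_{M\backslash e}(F)=|\mathcal A_e(F\cup\{e\})|-1-k$ with $k$ the number of non-full edges of $H$, and use the minimality of $G$ to bound the number of full edges. Your reformulation $\alpha_{M\backslash e}(F)=(\text{number of full edges})-c$ is correct, and your argument that a full edge cannot lie on a cycle of $H$ is exactly the paper's edge-deletion argument. The problem is the case you flag yourself: a full edge $\{i,j\}$ that is a bridge of $H$. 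As submitted the proof is incomplete there, and the patch you sketch cannot work. First, $F_P=\cl_{M\backslash e}\bigl(\bigcup_{l\in P}(A_l-e)\bigr)$ is a flat, but you give no reason it is a \emph{cyclic} flat of $M\backslash e$ (deleting $e$ from a cyclic set can create coloops in the restriction), so the minimality of $F$ cannot be invoked against it. Second, and more fundamentally, no ``second appeal to minimality of $G$'' can dispose of a full bridge: for a full edge one has $A_l-e\subseteq F$ for every $l\in V(H)$ and hence $\cl_M(A_i\cup A_j)=F\cup\{e\}$ exactly, so $G[\mathcal A_e(\cl_M(A_i\cup A_j))]=H$; if $\{i,j\}$ is a bridge of $H$, then the pair $(i,j)$ itself already witnesses that $G-\{i,j\}$ is not $(e,\mathcal A)$-presenting, so the edge genuinely is necessary and its deletion yields no contradiction. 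Your closing assertion that ``the two endpoints of $\{i,j\}$ remain joined after its deletion'' is precisely the statement that fails for a bridge.

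So the missing ingredient is a proof that a full bridge cannot occur for the \emph{minimal} choice of $F$ (or, failing that, a different witness set with negative $\alpha$). Note that the paper's own proof does not case-split on bridges: it deletes an arbitrary full edge and deduces connectivity of $G'[\mathcal A_e(X)]$ from the survival of the vertices of the cycle $C$ in that induced subgraph. To complete an argument along your lines you would need to show that when $\{i,j\}$ is a full bridge, the side of $H-\{i,j\}$ carrying the cycle already spans a cyclic flat properly contained in $F$ whose presenting subgraph contains that cycle, contradicting the minimality of $F$. Until that (or an equivalent) step is supplied, the proposal establishes $\alpha_{M\backslash e}(F)=(\text{number of full edges})-c$ but not that this quantity is negative, and hence does not prove the lemma.
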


\begin{proof}
Let $F$ be a cyclic flat of $M \backslash e$ such that $G[\mathcal A_e(F)]$ contains a cycle $C$. Furthermore, suppose there is no cyclic flat $F' \subset F$ of $M \backslash e$ such that $G[\mathcal A_e(F')]$ contains a cycle. First, we show there is no edge $\{i,j\}$ of $G[\mathcal A_e(F)]$ such that $\cl_{M \backslash e}(A_i \cup A_j - e) = F$. Suppose such an edge $\{i,j\}$ exists, and let $G'$ be the graph obtained by deleting $\{i,j\}$ from $G$. We shall show that $G'$ is an $(e, \mathcal A)$-presenting graph. Take distinct $i_0, j_0 \in \mathcal A_e(E(M))$, and let $X = \cl_M(A_{i_0} \cup A_{j_0})$. If either $A_i \not\subseteq X$ or $A_j \not\subseteq X$, then $G'[\mathcal A_e(X)] = G[\mathcal A_e(X)]$, and is connected. On the other hand, if $A_i \subseteq X$ and $A_j \subseteq X$, then $F = \cl_{M \backslash e}(A_i \cup A_j - e) \subseteq X$. In particular, every vertex of the cycle $C$ is contained in $G'[\mathcal A_e(X)]$. Therefore, there is still a path between $i$ and $j$ in $G'[\mathcal A_e(X)]$, so this graph is connected. Thus, $G'$ is $(e, \mathcal A)$-presenting. But this means that $G$ is not a minimal $(e, \mathcal A)$-presenting graph, a contradiction. Thus, no such edge $\{i,j\}$ exists.

In particular, this means that, for every edge $\{i',j'\}$ of $G[\mathcal A_e(F \cup \{e\})]$, we have that $\cl_{M \backslash e}(A_{i'} \cup A_{j'} - e) \subset F$. Furthermore, there is no $i \in \mathcal A(F)$ such that $A_i = F$, as then $e \in \cl_M(A_i)$. So, by Lemma~\ref{lem: deletion alpha},
$$
\sum_{\substack{F' \in \mathcal Z(M) \\ F' \subset F}} \alpha_{M \backslash e}(F') = |\mathcal A(F)| + k \,,
$$
where $k$ is the number of edges of $G[\mathcal A_e(F \cup \{e\})]$.

Therefore, by Lemma~\ref{lem: flat deletion},
\begin{align*}
\alpha_{M \backslash e}(F) &= |\mathcal A(F)| + |\mathcal A_e(F \cup \{e\})| - 1 - |\mathcal A(F)| - k \\
&= |\mathcal A_e(F \cup \{e\})| - 1 - k \,.
\end{align*}
$G[\mathcal A(F \cup \{e\})]$ contains a cycle and, by Lemma~\ref{lem: flat connected}, is connected. Thus, $k \geq |\mathcal A_e(F \cup \{e\})|$, and so $\alpha_{M \backslash e}(F) < 0$. Hence, $M \backslash e$ is not co-transversal.
\end{proof}

For the remainder of this section, we show that Theorems~\ref{thm: contraction} and \ref{thm: construction} follow from Lemmas~\ref{lem: tree sufficient} and \ref{lem: tree necessary} in the case where $\mathcal A$ is not a maximal presentation.

\begin{lemma} \label{lem: extend presentation}
Let $i \in [r]$, and let
$$
\mathcal A' = (A_1, A_2, \ldots, A_{i-1}, \cl_M(A_i), A_{i+1}, A_{i+2}, \ldots, A_r) \,.
$$
Let $G'$ be a graph defined as follows.
\begin{enumerate} [(i)]
\item If $e \notin \cl_M(A_i) - A_i$, then $G' = G$.
\item If $e \in \cl_M(A_i) - A_i$, then $G'$ is constructed from $G$ by adding a vertex $i$, and adding an edge from $i$ to exactly one $j \in \mathcal A_e(\cl_M(A_i))$.
\end{enumerate}
Then $G'$ is a minimal $(e, \mathcal A')$-presenting graph.
\end{lemma}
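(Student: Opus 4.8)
The plan is to verify three things in turn: that $\mathcal A'$ is a presentation of $M^*$ (so that the notion of an $(e,\mathcal A')$-presenting graph is even defined), that $G'$ is an $(e,\mathcal A')$-presenting graph, and that it is minimal. Write $A'_j$ for the $j$-th set of $\mathcal A'$, so $A'_j = A_j$ for $j \neq i$ while $A'_i = \cl_M(A_i)$, and set $\mathcal A'_e(X) = \{j : A'_j \subseteq X,\ e \in A'_j\}$. That $\mathcal A'$ is a presentation of $M^*$ is immediate from Lemma~\ref{lem: move between presentation}, applied to the transversal matroid $M^*$ with $B_i = \cl^*_{M^*}(A_i) = \cl_M(A_i)$: the hypothesis $\cl^*_{M^*}(B_i) = \cl^*_{M^*}(A_i)$ holds by idempotence of the closure operator, and the second hypothesis is vacuous since $A_i \subseteq B_i$. (Equivalently, this is a single step of Lemma~\ref{lem: closure is maximal}.) I will use freely the elementary closure facts $\cl_M(A'_j) = \cl_M(A_j)$ and $\cl_M(S \cup T) = \cl_M(\cl_M(S)\cup T)$, which together give $\cl_M(A'_{v_1}\cup A'_{v_2}) = \cl_M(A_{v_1}\cup A_{v_2})$ for all indices $v_1, v_2$; and the observation, obtained by unwinding the definitions with the fact that a flat containing $A_i$ contains $\cl_M(A_i)$, that for every flat $X$ of $M$ one has $\mathcal A'_e(X) = \mathcal A_e(X)$, unless we are in case~(ii) and $A_i \subseteq X$, in which case $\mathcal A'_e(X) = \mathcal A_e(X) \cup \{i\}$ with $i \notin \mathcal A_e(X)$. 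In particular $V(G') = \{j : e \in A'_j\}$ (it equals $V(G)$ in case~(i) and $V(G)\cup\{i\}$ in case~(ii)), so the identity on indices is a valid presenting map of $G'$.

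For the presenting property, the key tool is that the proof of Lemma~\ref{lem: flat connected} uses nothing about $F$ except that it is a flat, and so shows that $G[\mathcal A_e(X)]$ is connected for \emph{every} flat $X$ of $M$. Take distinct vertices $v_1, v_2$ of $G'$ and put $X = \cl_M(A'_{v_1}\cup A'_{v_2}) = \cl_M(A_{v_1}\cup A_{v_2})$, a flat; I must show $G'[\mathcal A'_e(X)]$ is connected. In case~(i), and in case~(ii) when $A_i \not\subseteq X$, we have $\mathcal A'_e(X) = \mathcal A_e(X)$ and the extra vertex and edge of $G'$ (if any) play no role, so $G'[\mathcal A'_e(X)] = G[\mathcal A_e(X)]$ is connected. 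In case~(ii) when $A_i \subseteq X$, we have $\cl_M(A_i)\subseteq X$, so the vertex $j^* \in \mathcal A_e(\cl_M(A_i))$ to which $i$ was joined lies in $\mathcal A_e(X)$, and $G'[\mathcal A'_e(X)]$ is the connected graph $G[\mathcal A_e(X)]$ with the pendant vertex $i$ attached --- still connected. One point to settle here is that $\mathcal A_e(\cl_M(A_i))\neq\emptyset$ in case~(ii), so that $j^*$ exists: since $e \in \cl_M(A_i)-A_i$ there is a circuit $C$ with $e \in C \subseteq A_i\cup\{e\}$, and running the argument of Lemma~\ref{lem: flats} on $C$ gives $J\subseteq[r]$ with $C \subseteq \bigcup_{j\in J}A_j \subseteq \cl_M(C) \subseteq \cl_M(A_i)$; as $e \in C$, some $A_j$ with $j \in J$ contains $e$, and then $j \in \mathcal A_e(\cl_M(A_i))$.

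For minimality I delete the edges of $G'$ one at a time. If $\{a,b\}\in E(G)$, minimality of $G$ gives vertices $w_1,w_2$ of $G$ with $(G-\{a,b\})[\mathcal A_e(Y)]$ disconnected, where $Y = \cl_M(A_{w_1}\cup A_{w_2})$; since deleting $\{a,b\}$ affects neither the extra vertex $i$ nor the extra edge $\{i,j^*\}$ of $G'$, the graph $(G'-\{a,b\})[\mathcal A'_e(Y)]$ is $(G-\{a,b\})[\mathcal A_e(Y)]$ with at most a pendant vertex $i$ added (attached, when $A_i\subseteq Y$, to $j^*\in\mathcal A_e(Y)$), hence still disconnected; so $G'-\{a,b\}$ is not $(e,\mathcal A')$-presenting. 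For the lone new edge $\{i,j^*\}$ (case~(ii)), take $v_1=i$ and $v_2=j^*$: since $A_{j^*}\subseteq\cl_M(A_i)$ we get $X = \cl_M(A'_i\cup A'_{j^*}) = \cl_M(A_i)$, so $i,j^*\in\mathcal A'_e(X)$, and as $\{i,j^*\}$ is the only edge of $G'$ at $i$, the vertex $i$ is isolated in $(G'-\{i,j^*\})[\mathcal A'_e(X)]$, which is therefore disconnected.

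I expect the main obstacle to be organisational: consistently tracking how the vertex set, the sets $\mathcal A'_e(X)$, and the relevant closures change under the single substitution $A_i\mapsto\cl_M(A_i)$, and, in case~(ii), cleanly separating the two ways a pair of vertices of $G'$ can involve the new vertex $i$. The only genuinely non-routine ingredient is the remark that Lemma~\ref{lem: flat connected} applies to arbitrary (not necessarily cyclic) flats --- this is what licenses the connectivity argument when one of $v_1,v_2$ is $i$, since then $X$ need not be of the form $\cl_M(A_{w_1}\cup A_{w_2})$ with $w_1,w_2\in V(G)$ --- along with the short circuit argument showing $\mathcal A_e(\cl_M(A_i))$ is nonempty in case~(ii).
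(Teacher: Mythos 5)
Your proof is correct and follows essentially the same route as the paper's: the same case split on whether $e \in \cl_M(A_i) - A_i$, the same dichotomy on whether $\cl_M(A_i) \subseteq X$ for the presenting property, and the same edge-by-edge minimality argument. The extra details you supply --- checking via Lemma~\ref{lem: move between presentation} that $\mathcal A'$ presents $M^*$, noting that Lemma~\ref{lem: flat connected} holds for arbitrary flats, and the circuit argument for the existence of $j^* \in \mathcal A_e(\cl_M(A_i))$ --- are points the paper uses implicitly or cites tersely, and are correctly justified.
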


\begin{proof}
First, suppose that $e \notin \cl_M(A_i) - A_i$, so $G' = G$. This means that $\mathcal A_e(E(M)) = \mathcal A'_e(E(M))$. Furthermore, for all distinct $i', j' \in \mathcal A'_e(E(M))$, we have that 
$$
\mathcal A'_e(\cl_M(A_{i'} \cup A_{j'})) = \mathcal A_e(\cl_M(A_{i'} \cup A_{j'})) \,.
$$ 
Thus, a graph is $(e, \mathcal A')$-presenting if and only if it is $(e, \mathcal A)$-presenting, so $G'$ is a minimal $(e, \mathcal A')$-presenting graph.

Otherwise, suppose $e \in \cl_M(A_i) - A_i$, and that $G'$ is constructed as described. Observe that $\mathcal A'_e(E(M)) = \mathcal A_e(E(M)) \cup \{i\}$, so $G'$ has the correct vertex set. Additionally, since $e \in \cl_M(A_i)$, Lemma~\ref{lem: flats} implies that there does exist $j \in \mathcal A_e(\cl_M(A_i))$, as required by the construction of $G'$.
	
We show that $G'$ is $(e, \mathcal A')$-presenting. Take distinct $i', j' \in \mathcal A'_e(E(M))$, and let $X = \cl_{M}(A_{i'} \cup A_{j'})$. If $\cl_M(A_i) \not\subseteq X$, then $G'[\mathcal A'_e(X)] = G[\mathcal A_e(X)]$, so $G'[\mathcal A'_e(X)]$ is also connected. Otherwise, $\cl_M(A_i) \subseteq X$. Since $j \in \mathcal A_e(\cl_M(A_i))$, we have that $j$ is a vertex of both $G[\mathcal A_e(X)]$ and $G'[\mathcal A'_e(X)]$. Thus, $G'[\mathcal A'_e(X)]$ can be constructed from $G[\mathcal A_e(X)]$ by adding the vertex $i$ and the edge $\{i,j\}$. Hence, $G'[\mathcal A'_e(X)]$ is connected, so $G'$ is $(e, \mathcal A')$-presenting.

Now we show that $G'$ is a minimal $(e, \mathcal A')$-presenting graph. 
Let $H'$ be a graph obtained from $G'$ by deleting an edge, say $\{i',j'\}$. If $\{i',j'\} = \{i,j\}$, then the vertex $i$ has no incident edges in $H'$, so this graph is disconnected. Hence, by Lemma~\ref{lem: flat connected}, $H'$ is not $(e, \mathcal A')$-presenting. Otherwise, $\{i',j'\}$ is also an edge of $G$, so let $H$ be the graph obtained from $G$ by deleting $\{i',j'\}$. $H$ is not $(e, \mathcal A)$-presenting, so there exist distinct $i'',j'' \in \mathcal A_e(E(M))$ such that $H[\mathcal A_e(\cl_M(A_{i''} \cup A_{j''}))]$ is disconnected. Clearly, adding a new vertex $i$ and one edge to $i$ cannot connect this graph, so $H'[\mathcal A_e(\cl_M(A_{i''} \cup A_{j''}))]$ is also disconnected. Thus, $H'$ is not $(e, \mathcal A)$-presenting, which completes the proof.
\end{proof}

\begin{lemma} \label{lem: simplify presentation}
Suppose $G$ is a tree with edge set
$$
\{\{i_1,j_1\},\{i_2,j_2\},\ldots,\{i_{k-1},j_{k-1}\}\} \,.
$$
Define $B_1, B_2, \ldots, B_{r-1}$ by 
\begin{enumerate} [(i)]
\item $B_m = A_{i_m} \cup A_{j_m} - e$ if $1 \leq m \leq k-1$, and 
\item $B_m = A_{m+1}$ if $k \leq m \leq r-1$. 
\end{enumerate}
Choose $J \subseteq [r-1]$. Suppose $M \backslash e$ has presentation $\mathcal C = (C_1, C_2, \ldots, C_{r-1})$, where $C_m = B_m$ if $m \notin J$, and $C_m = \cl_{M \backslash e}(B_m)$ if $m \in J$. Let $j \in J$. Then
$$
\mathcal C' = (C_1, C_2, \ldots, C_{j-1}, B_j, C_{j+1}, C_{j+2}, \ldots, C_{r-1})
$$
is a presentation of $M \backslash e$.
\end{lemma}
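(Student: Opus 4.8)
The approach is to deduce the lemma from Lemma~\ref{lem: move between presentation}. Set $N = M[\mathcal C]$. Since $\mathcal C$ presents $(M \backslash e)^*$ we have $N = (M \backslash e)^*$, hence $N^* = M \backslash e$, $\cl^*_N = \cl_{M \backslash e}$, and $r^*_N = r_{M \backslash e}$. I want to invoke Lemma~\ref{lem: move between presentation} for the presentation $\mathcal C$ of $N$, with the set $C_j$ replaced by $B_j$. Its first hypothesis is immediate: because $C_j = \cl_{M \backslash e}(B_j)$ is a flat of $M \backslash e$, we have $\cl^*_N(B_j) = \cl_{M \backslash e}(B_j) = C_j = \cl_{M \backslash e}(C_j) = \cl^*_N(C_j)$. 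So the whole proof reduces to verifying its second hypothesis: whenever $B_j \subseteq B \subseteq E(M \backslash e)$ and $C_j \not\subseteq B$, one has $r_{M \backslash e}(B) < |B| - |\{m \in [r-1] : C_m \subseteq B\}|$. Once this is established, Lemma~\ref{lem: move between presentation} yields at once that $\mathcal C'$ is a presentation of $N$, i.e.\ that $M \backslash e$ has presentation $\mathcal C'$.

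For the rank inequality, write $f(X) = |X| - |\{m \in [r-1] : C_m \subseteq X\}|$. By Lemma~\ref{lem: rank_general}, applied to $M \backslash e$ with the presentation $\mathcal C$, we have $r_{M \backslash e}(X) \le f(X)$ for every $X$, with equality at every flat by Lemma~\ref{lem: rank}; so it suffices to rule out equality when $B_j \subseteq B$ and $C_j \not\subseteq B$. Suppose, for contradiction, that $r_{M \backslash e}(B) = f(B)$ in this situation, and choose $x \in C_j \setminus B$. Two facts are then available. First, rewriting $r_{M \backslash e}(B) = f(B)$ through the corank identity $r_{N^*}(B) = |B| - r(N) + r_N(E(M \backslash e) - B)$ (here $r(N) = r^*(M \backslash e) = r-1$) shows that the transversal matroid $N \backslash B$, which by Lemma~\ref{lem: deletion presentation} has presentation $(C_m \cap (E(M \backslash e) - B))_{m \in [r-1]}$, has rank equal to the number of its non-empty defining sets; by Lemma~\ref{lem: Hall} those non-empty sets then admit a transversal. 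Second, since $x \in \cl_{M \backslash e}(B_j) \setminus B_j$, there is a circuit $D$ of $M \backslash e$ with $x \in D \subseteq B_j \cup \{x\}$, so $D - x \subseteq B_j \subseteq B$; thus $x$ lies in a circuit of $M \backslash e$ all of whose other elements belong to $B_j$.

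The heart of the argument --- and the step I expect to be the main obstacle --- is to turn these two facts into a contradiction. This is where it is essential that $B_j$ is not an arbitrary set with closure $C_j$ but a union of at most two sets of $\mathcal A$ minus $e$, arising from an edge of the tree $G$ (or, when $j \ge k$, a single set $A_{j+1}$ with $e \notin A_{j+1}$); an unrestricted ``un-closing'' of a presentation set is false, so structural input on $B_j$ must enter. I anticipate the contradiction being extracted by an alternating-path/exchange argument on the bipartite graph of the presentation $(C_m \cap (E(M \backslash e) - B))_m$: starting from the circuit $D \subseteq B_j \cup \{x\}$ and a transversal of the non-empty traces, and using the connectivity supplied by Lemma~\ref{lem: flat connected} together with the minimality of $G$, one reroutes the representative assigned to the index $j$ so as either to violate Hall's condition for $(C_m \cap (E(M \backslash e) - B))_m$ --- contradicting the existence of that transversal --- or to produce a set $C_m$ with $B_j \subseteq C_m \subseteq B$ and $x \in C_m$, contradicting $x \notin B$. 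Carrying this out completes the verification of the hypotheses of Lemma~\ref{lem: move between presentation}, and hence the proof.
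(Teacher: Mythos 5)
Your reduction to Lemma~\ref{lem: move between presentation} is exactly the paper's strategy, and your identification of the condition to verify --- that $r_{M \backslash e}(B) < |B| - |\{m \in [r-1] : C_m \subseteq B\}|$ whenever $B_j \subseteq B$ and $C_j \not\subseteq B$ --- is correct. However, your verification of that inequality is not a proof: you assemble two facts (a transversal of the non-empty traces via Hall's theorem, and a circuit through $x \in C_j - B$ contained in $B_j \cup \{x\}$) and then explicitly defer ``the heart of the argument'' to an unspecified alternating-path/exchange routine that you only ``anticipate'' will produce a contradiction. As written there is no argument that these two facts are incompatible, and no indication of how the structural input on $B_j$ (which you rightly flag as essential) actually enters. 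That missing step is the entire content of the lemma, so this is a genuine gap.

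The paper closes this gap without any contradiction argument, Hall's theorem, or exchange paths; it is a direct count. Split $\mathcal C(X) = \{m : C_m \subseteq X\}$ into the indices $m \geq k$ and the indices $m < k$. Each $m \geq k$ in $\mathcal C(X)$ forces $A_{m+1} = B_m \subseteq C_m \subseteq X$, so there are at most $|\mathcal A(X)|$ of them; each $m < k$ in $\mathcal C(X)$ forces $A_{i_m} \cup A_{j_m} - e \subseteq X$, i.e.\ corresponds to a distinct edge of the induced subgraph $G[\mathcal A_e(X \cup \{e\})]$ of the tree $G$, so there are at most $\max\{|\mathcal A_e(X \cup \{e\})| - 1, 0\}$ of them. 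The hypothesis $B_j \subseteq X$, $C_j \not\subseteq X$ means the index $j$ contributes to the relevant right-hand count but not to $\mathcal C(X)$, making one of these two bounds strict. Combining with the upper bound $r_{M \backslash e}(X) \leq |X| - |\mathcal A(X)| - \max\{|\mathcal A_e(X \cup \{e\})| - 1, 0\}$ from Lemma~\ref{lem: flat deletion} gives the strict inequality immediately. If you want to salvage your write-up, replace your contradiction setup with this counting argument; the tree property of $G$ is used precisely to bound the number of indices $m < k$ with $C_m \subseteq X$.
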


\begin{proof}	
Let $X \supseteq B_j$ such that $C_j \not\subseteq X$. We show that $r_{M \backslash e}(X) < |X| - |\mathcal C(X)|$, and the result follows by Lemma~\ref{lem: move between presentation}. Define
\begin{align*}
	\mathcal C^-(X) &= \{i \in \mathcal C(X) : i < k\} \\
	\mathcal C^+(X) &= \{i \in \mathcal C(X) : i \geq k\} \,.
\end{align*}

Let $\ell \in \mathcal C(X)$. If $\ell \geq k$, then
$$
B_\ell = A_{\ell + 1} \subseteq C_{\ell} \subseteq X \,,
$$
so $|\mathcal C^+(X)| \leq |\mathcal A(X)|$. Otherwise, $\ell < k$, and
$$
B_\ell = A_{i_\ell} \cup A_{j_\ell} - e \subseteq C_\ell \subseteq X \,.
$$
In particular, every element of $\mathcal C^-(X)$ correponds to an edge of $G[\mathcal A_e(X)]$. Since $G$ is a tree, $|\mathcal C^-(X)| \leq \max\{|\mathcal A_e(X \cup \{e\})| - 1, 0\}$.

We have that $B_j \subseteq X$ and $C_j \not\subseteq X$. Therefore, if $j \geq k$, then $|\mathcal C^+(X)| < |\mathcal A(X)|$, and if $j < k$, then $|\mathcal C^-(X)| < \max\{|\mathcal A_e(X \cup \{e\})| - 1, 0\}$. By Lemma~\ref{lem: flat deletion},
\begin{align*}
	r_{M \backslash e}(X) &\leq |X| - |\mathcal A(X)| - \max\{\mathcal A_e(X \cup \{e\}) - 1, 0\} \\
	&< |X| - |\mathcal C^+(X)| - |\mathcal C^-(X)| \\
	&= |X| - |\mathcal C(X)| \,,
\end{align*}
which completes the proof.
\end{proof}

\begin{proof}[Proof of Theorems~\ref{thm: contraction} and \ref{thm: construction}]
Let $\mathcal A^+$ be a maximal presentation of $M$. Repeatedly apply Lemma~\ref{lem: extend presentation} to construct a minimal $(e, \mathcal A^+)$-presenting graph $G^+$. Note that $G^+$ is a tree if and only if $G$ is a tree, so Theorem~\ref{thm: contraction} follows from Lemmas~\ref{lem: tree sufficient} and \ref{lem: tree necessary}.

To prove Theorem~\ref{thm: construction}, suppose $G$ is a tree with edge set
$$
\{\{i_1,j_1\},\{i_2,j_2\},\ldots,\{i_{k-1},j_{k-1}\}\} \,.
$$
Let $\{i,j\}$ be an edge of $G^+$ which is not an edge of $G$. By the construction of $G^+$, either $e \notin A_i$ and $A_j \subseteq \cl_M(A_i)$, or vice versa. Assume the former without loss of generality. Then
$$
\cl_{M \backslash e}(A_i \cup A_j - e) = \cl_{M \backslash e}(A_i) \,.
$$
Therefore, by Lemma~\ref{lem: tree sufficient},
\begin{align*}
(\cl_{M \backslash e}(A_{i_1} \cup A_{j_1} - e), \cl_{M \backslash e}(A_{i_2} \cup A_{j_2} - e), \ldots, \cl_{M \backslash e}(A_{i_{k-1}} \cup A_{j_{k-1}} - e), \\
\cl_{M \backslash e}(A_{k+1}), \cl_{M \backslash e}(A_{k+2}), \ldots, \cl_{M \backslash e}(A_r))
\end{align*}
is a presentation of $M \backslash e$. In particular, this is a presentation of the form described in Lemma~\ref{lem: simplify presentation} with $J = [r-1]$. Repeated application of Lemma~\ref{lem: simplify presentation} now proves Theorem~\ref{thm: construction}.
\end{proof}

\section{Path-Circular Matroids} \label{minorclosed}

In this section, we prove that path-circular matroids are closed under minors. Throughout this section, let $G$ be a simple graph, and let $\mathcal P$ be a path-circular collection of $G$. Let $p = u_1,u_2,\ldots,u_k$ be a path of $\mathcal P$, and suppose the vertex set of $G$ is 
$$
V(G) = \{u_1,u_2,\ldots,u_k,v_{k+1},v_{k+2},\ldots,v_r\} \,.
$$  
By the definition of a path-circular matroid, 
$$
\mathcal A = (N(u_1),N(u_2),\ldots,N(u_k),N(v_{k+1}),N(v_{k+2}),\ldots,N(v_r))
$$ 
is a presentation of $M(\mathcal P)$. 

We will show that $M(\mathcal P)/p$ is path-circular using Theorem~\ref{thm: construction}. However, applying Theorem~\ref{thm: construction} requires that $|\mathcal A| = r(M(\mathcal P))$, which may not be true if $M(\mathcal P)$ contains a coloop. However, we can freely add a coloop to a path-circular matroid by adding a new vertex to the graph, and adding a new path which contains only that vertex. Hence, if $M(\mathcal P)$ contains co-loops, we can delete the coloops, determine $M(\mathcal P) / p$, then add the coloops back in. Thus, we may assume that $M(\mathcal P)$ does not contain any coloops, and so $|\mathcal A| = r(M(\mathcal P))$.

Now, we describe how to construct a graph $G'$ and set of paths $\mathcal P'$ of $G'$ such that $M(\mathcal P) / p = M(\mathcal P')$. First, construct a new graph from $G$ as follows. For all $i \in \{1,2,\ldots,k-1\}$, subdivide the edge $\{u_i,u_{i+1}\}$ by a new vertex $u_{i,i+1}$, and, if $i > 1$, add an edge to $u_{i-1,i}$. Add edges between $u_{1,2}$ and every other vertex adjacent to $u_1$, and add edges between $u_{k-1,k}$ and every other vertex adjacent to $u_k$. Finally, delete the vertices $u_1, u_2, \ldots, u_k$. Let $G'$ be the resulting graph.

Next, we construct paths $\mathcal P'$ of $G'$ from the paths in $\mathcal P - p$. Consider each path $p' \in \mathcal P - p$. If $p'$ contains $u_1$, then, for all $u_i \in p'$ with $i \neq k$, replace $u_i$ with $u_{i,i+1}$. If $p'$ contains $u_k$ and does not contain $u_1$, then, for all $u_i \in p'$ with $i \neq 1$, replace $u_i$ with $u_{i-1,i}$. Finally, if the resulting path starts and ends with the same vertex, then delete either the start vertex or the end vertex. An example is shown in Figure~\ref{fig: contraction}.

\begin{figure}
\centering
\begin{subfigure} {\textwidth}
\centering
\begin{tikzpicture}
\coordinate (a1) at (30:3);
\coordinate (a2) at (90:3);
\coordinate (a3) at (150:3);
\coordinate (a4) at (210:3);
\coordinate (a5) at (270:3);
\coordinate (a6) at (330:3);
\coordinate (a7) at ($(a1) + (3,0)$);
\coordinate (a8) at ($(a6) + (3,0)$);

\draw[thick] (a1) .. controls ($(a1) + (0.4,1.3)$) and ($(a1) + (1.1,0.7)$) .. (a1);
\draw[thick] (a8) .. controls ($(a8) + (0.6,-1.25)$) and ($(a8) + (1.25,-0.6)$) .. (a8);

\draw[fill=white] (a1) circle (8pt);
\draw[fill=white] (a2) circle (8pt);
\draw[fill=white] (a3) circle (8pt);
\draw[fill=white] (a4) circle (8pt);
\draw[fill=white] (a5) circle (8pt);
\draw[fill=white] (a6) circle (8pt);
\draw[fill=white] (a7) circle (8pt);
\draw[fill=white] (a8) circle (8pt);

\draw[very thick, dashed] (32:2.8) arc (32:210:2.8); 
\draw[thick] (210:3.2) arc (210:270:3.2);
\draw[thick] (90:2.93) arc (90:270:2.93);
\draw[thick] (210:3.06) arc (210:327:3.06);
\draw[thick] ($(a1)+(0.1,0.1)$) -- ($(a1)!0.8!(a7)+(0.2,0.1)$) .. controls ($(a7)+(0.2,0.1)$) .. ($(a7)!0.2!(a8)+(0.2,0)$) -- ($(a8)+(0.2,0)$);
\draw[thick] ($(a1) + (0.1,-0.05)$) -- ($(a7) + (-0.05,-0.05)$);
\draw[thick] ($(a7)+(0,-0.15)$) -- ($(a7)!0.8!(a8)$) .. controls ($(a8)+(0,-0.1)$) .. ($(a8)!0.2!(a6)+(0,-0.1)$) -- ($(a6)+(0.1,-0.1)$);
\draw[thick] ($(a1)+(-0.1,-0.05)$) -- ($(a6)+(-0.1,0.05)$);
\draw[thick] (32:3.2) arc (32:90:3.2);
\draw[thick] (270:2.8) arc (270:329:2.8);
\end{tikzpicture}
\subcaption{}
\end{subfigure}

\begin{subfigure} {\textwidth}
\centering
\begin{tikzpicture}
\coordinate (a1) at (30:3);
\coordinate (a2) at (90:3);
\coordinate (a3) at (150:3);
\coordinate (a4) at (210:3);
\coordinate (a5) at (270:3);
\coordinate (a6) at (330:3);
\coordinate (a7) at ($(a1) + (3,0)$);
\coordinate (a8) at ($(a6) + (3,0)$);

\coordinate (b1) at (60:3);
\coordinate (b2) at (120:3);
\coordinate (b3) at (180:3);

\draw[thick] (b1) .. controls ($(b1) + (0.4,1.3)$) and ($(b1) + (1.1,0.7)$) .. (b1);
\draw[thick] (a8) .. controls ($(a8) + (0.6,-1.25)$) and ($(a8) + (1.25,-0.6)$) .. (a8);

\draw[fill=white] (a5) circle (8pt);
\draw[fill=white] (a6) circle (8pt);
\draw[fill=white] (a7) circle (8pt);
\draw[fill=white] (a8) circle (8pt);

\draw[fill=lightgray] (b1) circle (8pt);
\draw[fill=lightgray] (b2) circle (8pt);
\draw[fill=lightgray] (b3) circle (8pt);

\draw[thick] (180:3.2) arc (180:270:3.2);
\draw[thick] (60:2.93) arc (60:270:2.93);
\draw[thick] (180:3.06) arc (180:327:3.06);
\draw[thick] ($(b1)+(0.1,0.1)$) -- ($(b1)!0.8!(a7)+(0.2,0.1)$) .. controls ($(a7)+(0.2,0.1)$) .. ($(a7)!0.2!(a8)+(0.2,0)$) -- ($(a8)+(0.2,0)$);
\draw[thick] ($(b1) + (0.1,-0.05)$) -- ($(a7) + (-0.05,-0.05)$);
\draw[thick] ($(a7)+(0,-0.15)$) -- ($(a7)!0.8!(a8)$) .. controls ($(a8)+(0,-0.1)$) .. ($(a8)!0.2!(a6)+(0,-0.1)$) -- ($(a6)+(0.1,-0.1)$);
\draw[thick] ($(b1)+(-0.1,-0.1)$) -- ($(a6)+(-0.1,0.05)$);
\draw[thick] (60:3.2) arc (60:120:3.2);
\draw[thick] (270:2.8) arc (270:329:2.8);
\end{tikzpicture}
\subcaption{}
\end{subfigure}
\caption{A path-circular matroid (A), and the contraction of the dashed path (B). The new vertices are shown in grey.} \label{fig: contraction}
\end{figure}
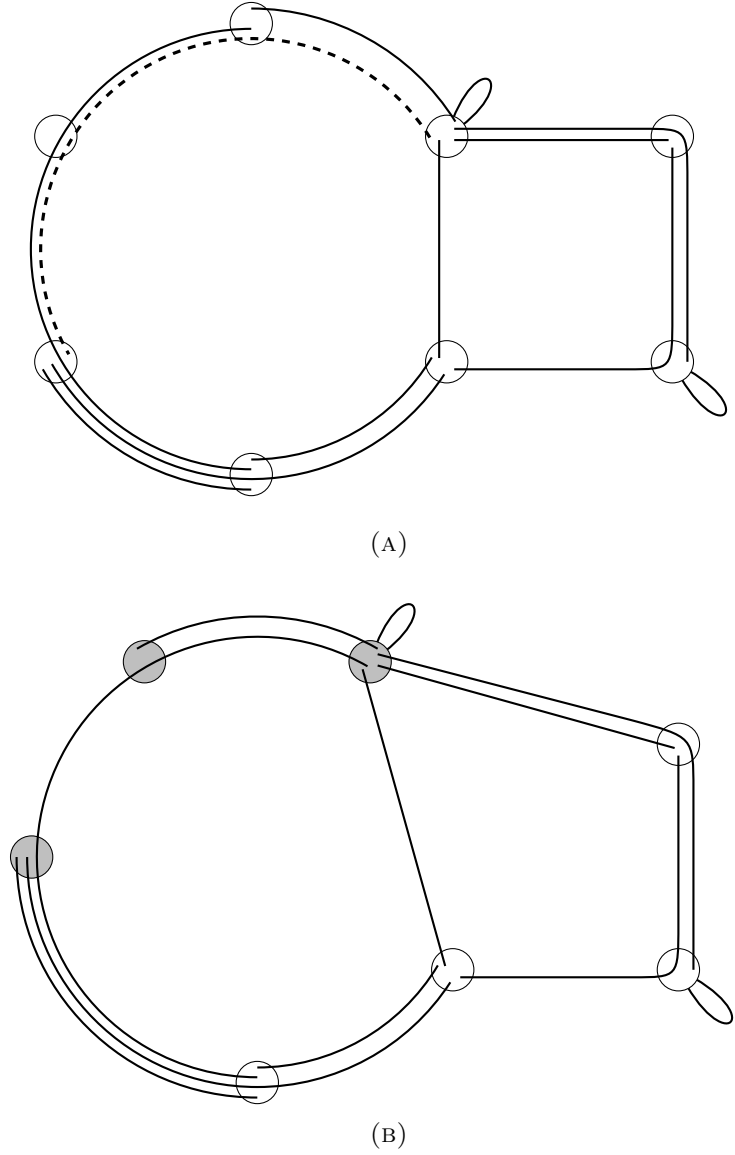

\begin{lemma} \label{lem: p' path circular}
$\mathcal P'$ is a path-circular collection of $G'$.
\end{lemma}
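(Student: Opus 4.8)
The plan is to verify directly that $\mathcal{P}'$, as constructed, satisfies the two defining conditions of a path-circular collection of $G'$: namely, for every path $q \in \mathcal{P}'$ with internal vertex $w$, (i) $\deg_{G'}(w) = 2$, and (ii) every path of $\mathcal{P}'$ through $w$ contains an end vertex of $q$. The construction replaces each old vertex $u_i$ (for $1 < i < k$) appearing in a path with one of the subdivision vertices $u_{i-1,i}$ or $u_{i,i+1}$, so I first want a clear picture of the degrees and adjacencies in $G'$. The subdivision vertices $u_{i,i+1}$ for $1 < i < k-1$ are adjacent only to $u_{i-1,i}$ and $u_{i+1,i+2}$ (by the subdivide-then-connect step), hence have degree $2$ in $G'$. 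The extreme subdivision vertices $u_{1,2}$ and $u_{k-1,k}$ inherit the neighbours of $u_1$ and $u_k$ respectively (other than along $p$), so in general have higher degree, exactly as $u_1$ and $u_k$ did in $G$. The remaining vertices $v_j$ have the same neighbourhood as before, except that a neighbour $u_1$ is replaced by $u_{1,2}$ and a neighbour $u_k$ by $u_{k-1,k}$; in particular $\deg_{G'}(v_j) = \deg_G(v_j)$.

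Next I would trace what happens to a single path $p' \in \mathcal{P} - p$ under the construction, splitting into cases according to which of $u_1, u_k$ it meets. If $p'$ meets neither $u_1$ nor $u_k$, it is left untouched (observe that by condition (ii) applied to $p$ in $\mathcal{P}$, if $p'$ contained some internal vertex $u_i$ of $p$ it would have to contain $u_1$ or $u_k$, so such a $p'$ uses no internal vertex of $p$ at all — it may use $u_1$ or $u_k$ only as its own endpoints, but we are in the case it uses neither — so $p'$ lives entirely in $G - \{u_1,\dots,u_k\} \subseteq G'$, and its vertex degrees and the set of paths through each vertex are unchanged, giving (i) and (ii) for free). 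If $p'$ contains $u_1$, every $u_i$ on $p'$ with $i \ne k$ is replaced by $u_{i,i+1}$; since the $u_i$ on $p'$ form a subpath $u_1, u_2, \dots, u_m$ of $p$ (these being forced by condition (i): each internal $u_i$ of $p$ has degree $2$ in $G$, so once $p'$ enters at $u_1$ it must follow $p$), the replacements $u_{1,2}, u_{2,3}, \dots, u_{m-1,m}$ (and $u_{m,m+1}$ if $m \ne k$, or $u_{k-1,k}$ handled by the $i \ne k$ clause together with the $u_k$ rule) again form a subpath of $G'$ using the newly added edges $\{u_{i-1,i}, u_{i,i+1}\}$, so $q$ is genuinely a path of $G'$. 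The symmetric case $u_k \in p'$, $u_1 \notin p'$ uses $u_{i-1,i}$ in place of $u_i$. The final "if the path starts and ends with the same vertex, delete one copy" step can only shorten a path, and deleting an endpoint does not create new internal vertices, so it cannot violate (i) or (ii); I should note why this coincidence can occur (a path $p'$ with both endpoints at $u_1$, or a path through both $u_1$ and $u_k$ whose two ends both get mapped to $u_{1,2}$, etc.) but not dwell on it.

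The substance is checking conditions (i) and (ii) for an internal vertex $w$ of some $q \in \mathcal{P}'$. There are two types of internal $w$: an "old" vertex $v_j$ or $u_i$-image that was already internal to $p'$, or a subdivision vertex $u_{i,i+1}$ / $u_{i-1,i}$ that became internal through the renaming. For type one, if $w = v_j$ was internal to $p'$, then $\deg_{G'}(v_j) = \deg_G(v_j) = 2$ by condition (i) for $\mathcal{P}$, and any $q'' \in \mathcal{P}'$ through $v_j$ comes from some $p'' \in \mathcal{P}-p$ through $v_j$ (the renaming never introduces a $v_j$), which by condition (ii) for $\mathcal{P}$ contains an endpoint of $p'$; I must check that endpoint survives the renaming as an endpoint of $q$ — it does, because the renaming only touches $u_i$'s with $i \ne k$ and $i \ne 1$, i.e. internal vertices of $p$, which by condition (i) cannot be an endpoint of $p'$ unless... — here is the one place needing care: an endpoint of $p'$ could be $u_1$ or $u_k$ themselves, which do get renamed to $u_{1,2}$ or $u_{k-1,k}$; but they are still renamed consistently across all paths, so "$p''$ contains an endpoint of $p'$" still translates to "$q''$ contains an endpoint of $q$". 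For type two, say $w = u_{i,i+1}$ became internal to $q$: then $q$ came from a $p'$ containing $u_1$, and $u_{i,i+1}$ internal to $q$ forces $u_i$ internal to $p'$ (or $u_1$ or $u_i = u_1$, edge cases), whence $\deg_G(u_i) = 2$; combined with $1 < i < k-1$ this gives $\deg_{G'}(u_{i,i+1}) = 2$, and for the boundary index $i = k-1$ one checks $u_{k-1,k}$ is internal to $q$ only if $u_k \in p'$ too, but then $q$ would also have been eligible for the $u_k$-renaming — I need to confirm the construction's priority ("if $p'$ contains $u_1$" takes precedence) keeps things coherent, and that in this situation $u_{k-1,k}$ still has the two neighbours it needs. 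Condition (ii) for such $w$: any $q''$ through $u_{i,i+1}$ came from a $p''$ through $u_i$ (the only way to acquire $u_{i,i+1}$), and $p''$ contains $u_1$; by condition (ii) for $\mathcal{P}$ applied to the internal vertex $u_i$ of $p$, $p''$ contains $u_1$ or $u_k$ — it contains $u_1$, good, but I need $p''$ to contain an endpoint of $q$, i.e. of $p'$, not of $p$. This is the genuine obstacle: condition (ii) for $\mathcal{P}$ gives information about endpoints of $p$, whereas I need endpoints of $p'$. I expect the resolution is that when $u_{i,i+1}$ is internal to $q$, the path $p'$ must extend past $u_i$ on \emph{both} sides within $p$ (else $u_i$, hence $u_{i,i+1}$, is an endpoint), so $p'$ contains $u_1$ and also reaches at least $u_{i+1}$; and any $p''$ through $u_i$ likewise contains $u_1$, so $p''$ and $p'$ share the vertex $u_1$, whose image $u_{1,2}$ is a vertex of $q$ — and if $u_{1,2}$ happens to be internal rather than an endpoint of $q$, I recurse or appeal to the fact that the \emph{nearest} shared structure works out; pinning down exactly which vertex of $q$ the endpoint maps to, using that the $u_i$-images on any path form a contiguous prefix of $(u_{1,2}, u_{2,3}, \dots)$, is the crux of the argument. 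Once that bookkeeping is settled, conditions (i) and (ii) hold for every internal vertex of every $q \in \mathcal{P}'$, so $\mathcal{P}'$ is a path-circular collection of $G'$, as claimed. $\qedhere$
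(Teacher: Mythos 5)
Your overall strategy is the same as the paper's (direct verification of conditions (i) and (ii) at each internal vertex of each path of $\mathcal P'$), and your treatment of condition (i) is essentially sound, but your argument for condition (ii) at a subdivision vertex contains a gap that you yourself label ``the genuine obstacle'' and never close. In that case you apply condition (ii) of $\mathcal P$ to the contracted path $p$, which only tells you that the preimage of the other path contains $u_1$ or $u_k$ --- information about the end vertices of $p$, not about the end vertices of the path $q$ you actually care about. Your proposed repair (both preimages contain $u_1$, whose image lies on $q$, and then ``recurse'' if that image is not an endpoint) is not an argument; left as is, the proof is incomplete.

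The missing idea is the one you already use correctly in the easy case $w = v_j$: apply condition (ii) of $\mathcal P$ to the \emph{preimage of the path whose internal vertex is in question}, not to $p$. Concretely, in your notation: since $u_{i,i+1}$ is internal to $q$, both $u_i$ and $u_{i+1}$ lie on the preimage $p'$ of $q$; since $q''$ contains $u_{i,i+1}$, its preimage $p''$ contains $u_i$ or $u_{i+1}$; hence $p'$ and $p''$ share a vertex. If that shared vertex is internal to $p'$, condition (ii) for $\mathcal P$ applied to $p'$ gives that $p''$ contains an end vertex of $p'$; if it is itself an end vertex of $p'$, this is immediate. It then only remains to push that end vertex through the renaming: if it is some $v_j$ it survives as an end vertex of $q$ lying on $q''$, and if it is some $u_j$ then one of $u_{j-1,j}$, $u_{j,j+1}$ is an end vertex of $q$, and $q''$ contains $u_{j-1,j}$ (unless $j=1$) and $u_{j,j+1}$ (unless $j=k$) because $p''$ contains $u_j$ and, by the degree-two condition, a contiguous run of the $u$'s. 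This is precisely how the paper's proof finishes, and it replaces the unresolved ``bookkeeping'' at the crux of your write-up.
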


\begin{proof}
Let $p' = u_1',u_2',\ldots,u'_{k'}$ be a path of $\mathcal P'$, and let $p''$ be the corresponding path of $\mathcal P - p$. Let $i \in \{2,3,\ldots,k'-1\}$. We need to show that
\begin{enumerate}[(i)]
	\item the degree of $u'_i$ in $G'$ is $2$, and
	\item every path of $\mathcal P'$ which contains $u'_i$ also contains either $u'_1$ or $u'_{k'}$.
\end{enumerate}
	
If $u'_i \in \{v_{k+1},v_{k+2},\ldots,v_r\}$, then $u'_i$ is also a vertex of $p''$. Therefore, $u'_i$ has degree $2$ in $G$, so also degree $2$ in $G'$. Otherwise $u'_i = u_{j,j+1}$ for some $j \in \{1,2,\ldots,k-1\}$. If $j \neq 1$ and $j \neq k-1$, then $u_{j,j+1}$ has degree $2$ in $G'$. Without loss of generality, suppose $j = 1$. Since $i \notin \{1,k'\}$, one of $u'_{i-1}$ and $u'_{i+1}$ is $u_{2,3}$ and the other is a vertex $v \in \{v_{k+1},v_{k+2},\ldots,v_r\}$. This implies that the path $P''$ has a sub-path $v,u_1,u_2$. Therefore, $u_1$ has degree $2$ in $G$, so $u_{1,2}$ has degree $2$ in $G'$. This establishes (i).

Now, suppose $q \in \mathcal P'$ contains $u_i'$. Let $q'$ be the corresponding path of $\mathcal P - p$. If $u'_i$ is an element of $\{v_{k+1},v_{k+2},\ldots,v_r\}$, then $u_i'$ is a vertex of both $q'$ and $p''$. Otherwise, $u_i' = u_{j,j+1}$ for some $j \in \{1,2,\ldots,k-1\}$. This implies that $q'$ contains at least one of $u_j$ and $u_{j+1}$. Since $u_{j,j+1}$ is not an end vertex of $p'$, we also have that both $u_j$ and $u_{j+1}$ are vertices of $p''$. Therefore, either $u_j$ or $u_{j+1}$ is a vertex of both $q'$ and $p''$. In either case, the paths $q'$ and $p''$ contain a vertex in common, which means that $q'$ also contains an end vertex of $p''$. Let this vertex be $u''_1$. If $u''_1 \in \{v_{k+1},v_{k+2},\ldots,v_r\}$, then $u''_1$ is also an end vertex of $p'$ and a vertex of $q$, and we have the desired result. Otherwise, $u''_1 = u_j$, for some $j \in [k]$. This implies that either $u_{j-1,j}$ or $u_{j,j+1}$ is an end vertex of $p'$. Furthermore, since $q'$ contains $u_j$, the path $q$ contains $u_{j-1,j}$ (unless $j=1$) and $u_{j,j+1}$ (unless $j = k$). Thus, $q$ contains an end vertex of $p'$, completing the proof.
\end{proof}

\begin{lemma} \label{lem: path minimal graph}
The path $p$ is a minimal $(p, \mathcal A)$-presenting graph.
\end{lemma}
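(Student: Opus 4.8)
The plan is to verify directly, from the definitions, that the path $p = u_1, u_2, \ldots, u_k$ --- viewed as the subgraph of $G$ with vertex set $\{u_1, \ldots, u_k\}$ and edges $\{u_i, u_{i+1}\}$ for $i \in [k-1]$ --- together with the bijection $\phi \colon u_i \mapsto i$, is a minimal $(p, \mathcal A)$-presenting graph. Since $u_i$ is a vertex of $p$ for every $i \in [k]$, while no $v_j$ with $j > k$ is, we have $\{i \in [r] : p \in A_i\} = [k]$, so $\phi$ is a bijection from $V(p)$ onto this set. (When $k \leq 1$ the graph $p$ has no edges and the statement is vacuous, so assume $k \geq 2$.)

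The combinatorial heart of the argument is the following claim: if $1 \leq a < c < b \leq k$, then every path $q \in \mathcal P$ containing $u_c$ also contains $u_a$ or $u_b$; equivalently, $N(u_c) \subseteq N(u_a) \cup N(u_b)$. I would prove it as follows. Each of $u_{a+1}, \ldots, u_{b-1}$ is an internal vertex of $p$, so by condition (i) of a path-circular collection it has degree $2$ in $G$, its neighbours being the vertices immediately preceding and following it on $p$; consequently $\{u_{a+1}, \ldots, u_{b-1}\}$ is a connected component of $G - \{u_a, u_b\}$. If some $q \in \mathcal P$ contained $u_c$ but avoided both $u_a$ and $u_b$, then $q$ --- being connected and meeting this component --- would lie entirely within it, so every vertex of $q$ would be internal to $p$; but condition (ii) forces $q$ to contain an end vertex of $p$, which is not internal to $p$ --- a contradiction. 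This is the step using both defining properties of a path-circular collection, and I expect it to need the most care, particularly the degenerate sub-cases $a = 1$, $b = k$, and $b = a+2$ (when the component is a single vertex).

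Granting the claim, I would check the presenting property thus. Fix distinct vertices $u_a, u_b$ with $a < b$ (without loss of generality), put $Z = \cl^*_M(N(u_a) \cup N(u_b))$, and let $S = \{c \in [k] : N(u_c) \subseteq Z\}$; the subgraph of $p$ induced by $\{u_c : c \in S\}$ is connected precisely when $S$ is an interval of consecutive integers. The claim gives $N(u_c) \subseteq N(u_a) \cup N(u_b) \subseteq Z$ for $a \leq c \leq b$, so $[a,b] \subseteq S$. For convexity, suppose some $c \notin S$ lies between two elements of $S$; then $c \notin [a,b]$, so $c < a$ or $c > b$. If $c < a$, pick $c' \in S$ with $c' < c$ (hence $c' < c < a$); the claim then gives $N(u_c) \subseteq N(u_{c'}) \cup N(u_a) \subseteq Z$, since $N(u_{c'}), N(u_a) \subseteq Z$, contradicting $c \notin S$. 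The case $c > b$ is symmetric. Hence $S$ is an interval, and $p$ is a $(p, \mathcal A)$-presenting graph.

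For minimality, I would observe that every $(e, \mathcal A)$-presenting graph is connected: applying the definition to any two of its vertices produces a path between them inside an induced subgraph, hence inside the whole graph. Since deleting any edge of the path $p$ disconnects it, no single-edge deletion of $p$ is $(p, \mathcal A)$-presenting, so $p$ is minimal. Together with the previous paragraph, this proves the lemma.
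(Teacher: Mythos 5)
Your proof is correct and follows essentially the same route as the paper: both establish the key containment $N(u_c) \subseteq N(u_a) \cup N(u_b)$ for $a < c < b$ using properties (i) and (ii) of a path-circular collection, deduce that the relevant induced subgraphs are intervals of the path (hence connected), and obtain minimality from the fact that every presenting graph is connected. Your connected-component phrasing of the key claim and your direct connectivity argument for minimality are only cosmetic variations on the paper's argument.
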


\begin{proof}
Let $i_1, i_2, i_3 \in [k]$ such that $i_1 < i_2 < i_3$. We show that $N(u_{i_2}) \subseteq N(u_{i_1}) \cup N(u_{i_3})$. Let $p' \in N(u_{i_2})$. Since $\mathcal P$ is a path-circular collection, the path $p'$ either contains $u_1$ or $u_k$. In the former case, $p'$ contains the vertices $u_1,u_2,\ldots,u_i$, since the degree of each of these vertices is $2$ in $G$. In particular, $p'$ contains the vertex $u_{i_1}$, so $p' \in N(u_{i_1})$. Symmetrically, in the latter case, $p' \in N(u_{i_3})$. Therefore, $p' \in N(u_{i_1}) \cup N(u_{j_3})$, so $N(u_{i_2}) \subseteq N(u_{i_1}) \cup N(u_{i_3})$.

Now, consider $i, j \in [k]$ with $i < j$. Let $i'$ be the minimum element of $[k]$ such that 
$$
N(u_{i'}) \subseteq \cl^*_{M(\mathcal P)}(N(u_i) \cup N(u_j)) \,,
$$ 
and let $j'$ be the maximum element of $[k]$ such that 
$$
N(u_{j'}) \subseteq \cl^*_{M(\mathcal P)}(N(u_i) \cup N(u_j)) \,.
$$ 
We have already shown that, for all $x \in \{i',i'+1,\ldots,j'\}$,
$$
N(u_x) \subseteq N(u_{i'}) \cup N(u_{j'}) \subseteq \cl^*_{M(\mathcal P)}(N(u_i) \cup N(u_j)) \,.
$$
Therefore, the subgraph of $P$ induced by the vertices $\{u \in V(p) : N(u) \subseteq \cl^*_{M(\mathcal P)}(N(u_i) \cup N(u_j))\}$ is the path $u_{i'},u_{i'+1},\ldots,u_{j'}$. This is connected, so $p$ is a $(p, \mathcal A)$-presenting graph. Furthermore, $p$ is a tree, so $p$ is a minimal $(p, \mathcal A)$-presenting graph by Lemma~\ref{lem: flat connected}.
\end{proof}

\begin{lemma} \label{lem: contraction}
$M(\mathcal P) / p = M(\mathcal P')$.	
\end{lemma}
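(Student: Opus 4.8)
The plan is to read off a presentation of $M(\mathcal P)/p$ from Theorem~\ref{thm: construction} and then to recognise it, after identifying ground sets, as the defining presentation of $M(\mathcal P')$.

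By Lemma~\ref{lem: path minimal graph}, the path $p$ --- viewed as the graph on $u_1,\ldots,u_k$ with edges $\{u_1,u_2\},\ldots,\{u_{k-1},u_k\}$ --- is a minimal $(p,\mathcal A)$-presenting graph which is a tree, with presenting map $u_\ell\mapsto\ell$. Since $\mathcal A$ has $r=r(M(\mathcal P))$ sets (recall the reduction to the coloop-free case), $p\in N(u_\ell)$ for $\ell\le k$, and $p\notin N(v_j)$ for $j>k$, Theorem~\ref{thm: construction} applies with $e=p$ and shows that
$$
\bigl(N(u_1)\cup N(u_2)-p,\ \ldots,\ N(u_{k-1})\cup N(u_k)-p,\ N(v_{k+1}),\ \ldots,\ N(v_r)\bigr)
$$
is a presentation of $M(\mathcal P)/p$.

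The construction of $\mathcal P'$ attaches to each $q\in\mathcal P-p$ a path $q^\star$ of $G'$, giving a bijection $\mathcal P-p\to\mathcal P'$; identify $E(M(\mathcal P'))=\mathcal P'$ with $\mathcal P-p=E(M(\mathcal P)/p)$ along it. Since $V(G')=\{u_{1,2},\ldots,u_{k-1,k}\}\cup\{v_{k+1},\ldots,v_r\}$, the defining presentation of $M(\mathcal P')$ is $(N'(u_{1,2}),\ldots,N'(u_{k-1,k}),N'(v_{k+1}),\ldots,N'(v_r))$, where $N'(w)=\{q\in\mathcal P-p:\ w\text{ lies on }q^\star\}$. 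So it is enough to show, for every $q\in\mathcal P-p$, that (i) for $i\in[k-1]$ the vertex $u_{i,i+1}$ lies on $q^\star$ if and only if $u_i$ or $u_{i+1}$ lies on $q$, and (ii) for $j>k$ the vertex $v_j$ lies on $q^\star$ if and only if $v_j$ lies on $q$. Granting (i) and (ii) we get $N'(u_{i,i+1})=N(u_i)\cup N(u_{i+1})-p$ and $N'(v_j)=N(v_j)$ (note $p\notin N(v_j)$), so the two presentations are the same collection of subsets of $\mathcal P-p$ and therefore $M(\mathcal P)/p=M(\mathcal P')$.

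Claims (i) and (ii) carry the content, and I expect the main obstacle to be there. For (ii): passing from $q$ to $q^\star$ only relabels $u$-vertices by subdivision vertices and, in degenerate cases, deletes an endpoint; a vertex $v_j$ with $j>k$ is never relabeled, and is deleted only when $q^\star$ collapses, which forces $q$ to have consisted of $u$-vertices alone, so in that case neither side of (ii) holds. For (i) I would first use the path-circular axioms to pin down the $u$-vertices of $q$: any path of $\mathcal P$ meeting an internal vertex $u_i$ ($2\le i\le k-1$) of $p$ meets $u_1$ or $u_k$, and since $u_2,\ldots,u_{k-1}$ have degree $2$ in $G$ this propagates along $p$, so the $u$-vertices of $q$ form at most two contiguous intervals, one abutting $u_1$ and one abutting $u_k$. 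One then checks interval by interval that the relabelling carries the interval abutting $u_1$ (resp.\ $u_k$) onto the corresponding interval of the path $u_{1,2},\ldots,u_{k-1,k}$ abutting $u_{1,2}$ (resp.\ $u_{k-1,k}$); the delicate points --- the behaviour of the transformation at the vertex $u_k$ (resp.\ $u_1$) itself, and, when $q$ meets both ends of $p$, that the inner endpoints of the two intervals are accounted for without gap or overlap --- follow from the same degree-$2$ propagation together with the structure of $G'$.
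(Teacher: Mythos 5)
Your proof is correct and takes essentially the same route as the paper's: both invoke Lemma~\ref{lem: path minimal graph} and Theorem~\ref{thm: construction} to obtain the presentation $(N(u_1)\cup N(u_2)-p,\ldots,N(u_{k-1})\cup N(u_k)-p,N(v_{k+1}),\ldots,N(v_r))$ of $M(\mathcal P)/p$, and identify it with the defining presentation of $M(\mathcal P')$ via the observations $N'(u_{i,i+1})=N(u_i)\cup N(u_{i+1})-p$ and $N'(v_j)=N(v_j)$. The only difference is one of emphasis: you sketch a verification of these two identities using the degree-$2$ propagation along $p$, whereas the paper asserts them directly from the construction of $\mathcal P'$.
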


\begin{proof}
Let $v$ be a vertex of $G'$. Suppose $v = u_{i,i+1}$ for some $i \in \{1,2,\ldots,k-1\}$. Then each path of $\mathcal P'$ which contains $u_{i,i+1}$ corresponds to a path of $\mathcal P - p$ which contains either $u_i$ or $u_{i+1}$. Thus, the paths of $\mathcal P'$ which contain $v$ are the paths which correspond to $N(u_i) \cup N(u_{i+1}) - p$. Otherwise, each path of $\mathcal P'$ which contains $v$ corresponds to a path of $\mathcal P - p$ which also contains $v$. Therefore, the paths of $\mathcal P'$ which contain $v$ are the paths which correspond to $N_G(v) - p = N_G(v)$. Hence,
$$
(N(u_1) \cup N(u_2) - P, N(u_2) \cup N(u_3) - P, \ldots, N(u_{k-1}) \cup N(u_k) - P, N(v_{k+1}), N(v_{k+2}), \ldots, N(v_r))
$$
is a presentation of $M(\mathcal P')$. By Lemma~\ref{lem: path minimal graph} and Theorem~\ref{thm: construction}, this is also a presentation of $M(\mathcal P) / p$.
\end{proof}

\begin{proof}[Proof of Theorem~\ref{thm: minors}]
By Lemma~\ref{lem: deletion presentation}, $M(\mathcal P) \backslash p = M(\mathcal P - p)$, and, by Lemma~\ref{lem: contraction}, $M(\mathcal P) / p = M(\mathcal P')$.
\end{proof}

\printbibliography
\end{document}